
\documentclass[a4paper]{amsart}
\usepackage[english]{babel}
\usepackage{inputenc}
\usepackage[T1]{fontenc}
\usepackage{amsmath}
\usepackage{amsthm}
\usepackage{amsfonts}
\usepackage{amssymb}
\usepackage{enumerate}
\usepackage{enumitem}
\usepackage{mathrsfs}

\usepackage{graphicx}
\usepackage{hyperref}
\usepackage{cleveref}
\usepackage{tikz}
\usepackage{braids}
%\usetikzlibrary{matrix,calc,arrows,knots,intersections}
%\usepackage{listings}
\addtolength{\hoffset}{-1cm}
\addtolength{\textwidth}{2cm}
\linespread{1.2}

\setlist[enumerate]{label*=\alph*),ref=\alph*)}

\newcommand{\BB}{\mathbb{B}}
\newcommand{\ZZ}{\mathbb{Z}}

\newcommand{\FF}{\mathbb{F}}
\newcommand{\KK}{\mathbb{K}}
\newcommand{\TT}{\mathbb{T}}
\newcommand{\q}{\mathbf{q}}
\newcommand{\p}{\mathbf{p}}

\newcommand{\cF}{\mathcal{F}}
\newcommand{\cR}{\mathcal{R}}
\newcommand{\cS}{\mathcal{S}}
\newcommand{\A}{A}
\newcommand{\Sal}{\overline{\textrm{Sal}}}
\newcommand{\im}{\textrm{im\! }}
\newcommand{\mult}{\textrm{mult}}
\newcommand{\Tor}{\textrm{Tor}}
\newcommand{\coker}{\textrm{coker\! }}
\newcommand{\gr}{\textrm{gr}}
\newcommand{\s}{d}

\newcommand\rightmap[1]{\smash{\mathop{\ \rightarrow\ }\limits^{#1}}}

\pagenumbering{arabic}

\newtheorem{teo}{Theorem}[section]
\newtheorem{dfn}[teo]{Definition}
\newtheorem{exam}[teo]{Example}
\newtheorem{prop}[teo]{Proposition}
\newtheorem{lem}[teo]{Lemma}

\newtheorem{obs}[teo]{Remark}

\title{Homology of even Artin kernels}

\author[R.~Blasco]{Rub\'en Blasco-Garc{\'\i}a}

\author[J.I.~Cogolludo]{Jos\'e Ignacio Cogolludo-Agust{\'\i}n}

\author[C.~Mart{\'\i}nez]{Conchita Mart{\'\i}nez-P\'erez}
\address{Departamento de Matem\'aticas, IUMA, Facultad de Ciencias\\
Universidad de Zaragoza\\
c/ Pedro Cerbuna 12\\
E-50009 Zaragoza SPAIN}
\email{rubenb@unizar.es,jicogo@unizar.es,conmar@unizar.es}

\begin{document}

\thanks{
The authors are partially supported by Grupo ``\'Algebra y Geometr{\'\i}a'' of Gobierno de 
Arag\'on/Fondo Social Europeo, the first and second authors are partially supported by 
the Spanish Government MTM2016-76868-C2-2-P, and the first and third authors 
are partially supported by the Spanish Government PGC2018-101179-B-100.} 

\subjclass[2010]{Primary 20F24, 20F36; Secondary 57M07, 55P20}

\maketitle

\begin{abstract} 
We explicitly compute the homology groups with coefficients in a field of characteristic zero
of cocyclic subgroups or even Artin groups of FC-type. We also give some partial 
results in the case when the coefficients are taken in a field of prime characteristic.
\end{abstract}

\section{Introduction}
The family of Artin-Tits groups has received increasing attention in the past years due to their intrinsic
geometrical nature. They are closely related to Coxeter groups, that is, groups generated by reflections.
Like Coxeter groups, Artin-Tits groups are defined in a combinatorial manner starting from a labeled graph
which describes a presentation. Several questions arise from this fact, trying to determine to what extend 
properties of the groups can be described or characterized combinatorially, that is, in terms of a defining graph. 

Several questions regarding this connection between group and defining graph remain open in general, but are
solved for particular subfamilies of Artin-Tits groups, such as the family of right-angled Artin groups. 
Right-angled Artin groups are defined only by commutation relations among some of their generators. Properties 
such as polyfreeness or residually finiteness are satisfied for these groups, at least those associated with 
finite graphs (see~\cite{Howie,Hermiller-Sunic-PF,Duchamp-PF}). Other important properties are also described 
combinatorially, such as, rigidity, the $K(\pi,1)$-conjecture, quasi-projectivity or the main focus of this 
paper: the homology of Artin kernels.

In short, an Artin kernel is a cocyclic subgroup of an Artin-Tits group. The homology with trivial coefficients 
of such a subgroup can be seen as a module over the ring of deck transformations. Precise definitions will be 
provided in \S\ref{sec:prelim}. 

The main purpose of this paper is to give an explicit combinatorial description of Artin kernels for a family 
of Artin-Tits groups that generalizes right-angled Artin groups, namely, even Artin groups. Our results generalize those in \cite{learymuge}, \cite{ACM1}.

The systematic study of even Artin groups was initiated by the first author in his Ph.D.~thesis~\cite{Blasco-tesis}. 
Some of the results in this thesis were published separately, for example in~\cite{Blasco} there is a 
characterization of the even Artin groups which are quasi-projective in terms of the graph and in~\cite{Blasco-PF} 
it is shown that even Artin groups of FC-type are poly-free. The $K(\pi,1)$-conjecture is also known to be true for 
even Artin groups of FC-type. As precursors of this thesis, one can find some results in the literature about even 
Coxeter groups (see for instance~\cite{Antolin}) and a brief reference to even Artin groups in~\cite{ACM1}.

These groups receive different names in the literature, note that we will refer to them as Artin-Tits groups since 
they are attributed to both E.~Artin and J.~Tits in their full generality. However we use right-angled Artin groups 
and even Artin groups for the subfamilies and Artin kernels for their cocyclic subgroups.

The paper is organized as follows. In \S\ref{sec:prelim} the main definitions of even Artin groups and Artin kernels 
are given together with the construction of the Salvetti complex. The Salvetti complex provides a $K(\pi,1)$-model 
for the group whenever the defining graph is of FC-type. 
In \S\ref{sec:complex} we use this complex to construct, for a given Artin kernel, a chain complex whose homology 
is the homology of that Artin kernel. Using this complex, we prove some partial results about the homology groups 
of Artin kernels with coefficients in an arbitrary field $\KK$. These homology groups are in a natural way modules 
over a polynomial ring $\KK[t^{\pm1}]$ and thus decompose as a direct sum of a free part and a torsion part as follows
\begin{equation}
\label{eq:main-hk}
H_{k+1}(A^\chi_\Gamma;\KK)=\KK[t^{\pm 1}]^{r_k}
\oplus \left( \frac{\KK[t^{\pm 1}]}{(t-1)}\right)^{\dim_\KK\im \partial_{k+1}}
\bigoplus_{\s\in \TT_\Gamma} \bigoplus_{j=1}^\infty 
\left( \frac{\KK[t^{\pm 1}]}{\Phi_\s(t)^{j}}\right)^{n_{k,j}(\s)},
\end{equation}
where $r_k:=\dim_\KK \tilde H_k(\cF^f(\Gamma);\KK)$, $\cF^f(\Gamma)$ is the \emph{finite type flag complex}
associated with $\Gamma$, $n_{k,j}(\s)\in \ZZ_{\geq 0}$, $\Phi_\s(t)$ is the $\s$-th cyclotomic polynomial in 
$\KK[t]$, and $\TT_\Gamma$ is a finite set. This is the main result of Theorem~\ref{teo:torsionhk}.
The last two sections are devoted to calculating both the free and the torsion part in terms of $\Gamma$, $\chi$, and
the characteristic of the base field $\KK$. In particular, we dedicate~\S\ref{sec:resonant} to determining the rank of 
the free part of such modules in the most general case, that is, any Artin kernel and any characteristic for the 
field $\KK$ in Theorems~\ref{teo:h1resonant} and~\ref{teo:h2resonant}. This gives an insight into the problem of 
finiteness properties of Artin kernels in terms of $\Gamma$ and $\chi$ as discussed in Example~\ref{exam:dihedral-rank}.
Finally, in~\S\ref{sec:main} the main results are proved, namely, a combinatorial description of the $k$-th homology 
with coefficients in a field of characteristic zero, of Artin kernels of even Artin groups of FC-type in terms of 
their defining graph. A \emph{non-resonance} condition on the morphism $\chi$ is required for the techniques to work,
namely, $\chi(g_v)\neq 0$ for all the standard generators $g_v$ of the Artin-Tits group $A_\Gamma$.
%The torsion part of these modules decomposes in terms associated to powers of cyclotomic polynomials. 
A first discussion on the torsion part of $H_{1}(A^\chi_\Gamma;\KK)$ in terms of spanning trees is 
provided in Theorem~\ref{thm:ms}. This approach is classical for right-angled Artin kernels and can be applied 
to even Artin kernel as well. The second part of the section deals with the combinatorial description of the 
torsion part of $H_{k+1}(A^\chi_\Gamma;\KK)$ by introducing a multiplicity spectral sequence $\{E^s_\s\}$ of the 
finite type flag complex associated with the graph and with $\s\in \TT_\Gamma$. The main result is provided in
Theorem~\ref{thm:main}, where the sequence of $k$-th relative Euler characteristics of the multiplicity spectral 
sequence at the different pages of $\{E^s_\s\}$, completely determines the invariant factors of the torsion part of 
$H_{k+1}(A^\chi_\Gamma;\KK)$ by a set of linear equations of the following type
$$
\sum_{j\geq s} n_{k,j}(\s)=\chi^{\text{rel}}_{k}(E^s_\s),
$$
where $n_{k,j}(\s)$ is defined in~\eqref{eq:main-hk} for any $d\in \TT_\Gamma$.
Moreover, the Jordan blocks associated with the torsion of $H_{k+1}(A^\chi_\Gamma;\KK)$ have size at most~$k+2$.
We end this paper with an example that shows that the bound provided for the Jordan blocks is sharp.

\subsection{Acknowledgements}
The authors would like to thank the anonymous referee for their comments and suggestions that have helped 
improve the overall quality of this paper.

\section{Preliminaries}
\label{sec:prelim}
For the sake of completeness and to fix notation, we will give the explicit definition of Artin groups. 
To do that we will use the graph notation but note that our graphs are not the Coxeter-Dynkin diagrams of 
Artin groups. Instead, the way in which we define an Artin group associated to a graph generalizes the usual 
graph notation for right-angled Artin groups.

\subsection{Even Artin groups}
Let $\Gamma=(V,E,\ell)$ be a labeled finite simplicial graph. By a label we mean a map $\ell:E\to \ZZ_{> 1}$.
The \emph{Artin-Tits group} associated with $\Gamma$ has the following finite presentation
\begin{equation}
\label{eq:artin-pres}
\A_\Gamma:=\langle g_v, v\in V : R(e,\ell(e)), \ e\in E\rangle,
\end{equation}
where $R(e,\ell(e))$, $e=\{v,w\}$ represents the relation $(g_vg_w)^{k}=(g_wg_v)^{k}$ if $\ell(e)=2k$ and 
$(g_vg_w)^{k}g_v=(g_wg_v)^{k}g_w$ if $\ell(e)=2k+1$.

In this paper we consider positive even labels, that is, $\ell(e)=2\tilde \ell(e)$ with $\tilde \ell(e)\geq1$.
An Artin-Tits group is called \emph{even} (EAG for short) if its associated graph $\Gamma$
has only even labels. In the case of EAGs note that the relations $R(e,\ell(e))$ are commutator relations 
$[a,b]_{k}=(ab)^k(ba)^{-k}$ generalizing $[a,b]=[a,b]_1$.

Analogously, the Coxeter group associated with $\Gamma$ is given as the quotient of $\A_\Gamma$ by the normal subgroup
generated by the squares of the generators $g_v$, that is, 
$$
W_\Gamma:=\A_\Gamma/\langle g_v^2, v\in V\rangle=\langle g_v, v\in V : g_v^2=(g_vg_w)^{\ell(e)}=1, e=\{v,w\}\in E\rangle.
$$
An Artin-Tits group $\A_\Gamma$ is called of \emph{spherical type} if $W_\Gamma$ is a finite group. Another interesting class
of Artin-Tits groups is given as follows. Consider $X\subset \Gamma$ a labeled subgraph of $\Gamma$, $\A_X$ the Artin-Tits 
group associated with $X$ and $V_X\subseteq V$ the set of vertices in $X$. The subgroup of $W_\Gamma$ generated by 
$\{g_v, v\in V_X\}$ is in fact the Coxeter group $W_X$ (see~\cite{Bourbaki}) called the \emph{standard parabolic subgroup} 
of $W_\Gamma$ associated with $X$. An Artin-Tits group $\A_\Gamma$ is said to be of \emph{FC-type} if all the standard 
parabolic groups $W_X$ associated with complete subgraphs $X\subset \Gamma$ are finite.

A standard operation to obtain new Artin-Tits groups from old ones comes from taking the 2-join of two graphs, namely, 
the join of the graphs in which all the new edges are labeled by two. Formally, consider $\Gamma_1=(V_1,E_1,\ell_1)$ and 
$\Gamma_2=(V_2,E_2,\ell_2)$ two labeled graphs and define $\Gamma_1*\Gamma_2=(V,E,\ell)$ as $V:=V_1\cup V_2$, 
$E:=E_1\cup E_2\cup V_1\times_S V_2$ (where $V_1\times_SV_2$ denotes the symmetric product of $V_1$ and $V_2$) and 
$$
\ell(e)=\begin{cases}\ell_i(e) & \textrm{ if } e\in E_i\\ 2 & \textrm{ if } e\in V_1\times_S V_2.\end{cases}
$$

Note that $\A_{\Gamma_1*\Gamma_2}=\A_{\Gamma_1} \times \A_{\Gamma_2}$. We refer to a labeled graph or to its 
associated Artin-Tits group as \emph{irreducible} if it is not the 2-join of two labeled graphs.

\begin{exam}
\label{exam:dihedral}
{\rm
Artin-Tits complete graphs with two vertices are always spherical and their Coxeter groups are dihedral groups of order 
$2\ell$. For $\ell\geq 3$ they correspond to the Dynkin diagrams of types $\mathbb{A}_{2}$, $\BB_2$, resp. 
$\mathbb{I}_{2}(\ell)$ for $\ell=3,4$, resp. $\ell\geq 5$. And their defining graphs in our sense are of the form:
(see~\cite{Coxeter-discrete,Coxeter-complete}).}
$$
\begin{tikzpicture}
\tikzstyle{subj} = [circle, minimum width=8pt, fill, inner sep=0pt]
\tikzstyle{obj}  = [circle, minimum width=8pt, draw, inner sep=0pt]
\node[subj] (n1) at (1,1) {};
\node[subj] (n2) at (2,1) {};
\foreach \from/\to in {n1/n2}
\draw (\from) -- node[above] {$\ell$} (\to) ;
\end{tikzpicture} 
$$
The only irreducible spherical graphs with three vertices are those whose Dynkin diagram is of 
type $\mathbb{A}_{3}$, $\mathbb{B}_{3}$, and $\mathbb{H}_{3}$. Note that none of them yields an even Artin group. 
In particular, the only irreducible EAGs are the cyclic kind $\mathbb{A}_{1}$, the dihedral kind $\mathbb{B}_{2}$, 
and $\mathbb{I}_{2}(2k)$, $k>2$. In other words, any spherical EAG must be a 2-join of these.
\end{exam}

\begin{exam}{\rm
As a consequence of the discussion above, RAAGs are of FC-type and all their subgroups associated to complete 
subgraphs of $\Gamma$ are free abelian.}
\end{exam}

\subsection{Finite type flag complex}
\label{sec:flag}
Artin-Tits groups of FC-type satisfy the $K(\pi,1)$ conjecture (see~\cite{Charney-kpi1}), 
which in particular means that there is a nice combinatorial description of a natural Eilenberg-MacLane space. 
In this section we will briefly describe such spaces for EAGs. This space can be described via a CW-complex 
called the \emph{Salvetti complex} (see~\cite{Charney-finite,Paris}). 

We first define the finite type flag complex $\cF^f(\Gamma)$ of $\Gamma=(V,E,\ell)$ as follows. 
Consider $\cS_{\Gamma}^f:=\{X\subset \Gamma\mid W_X \textrm{ is finite}\}$. To construct $\cF^f(\Gamma)$, a simplex 
of dimension $k$ of $\cF^f(\Gamma)$ is given by any subgraph $X\subset \Gamma$, $V_X=\{v_0,\dots,v_k\}\subset V$ 
such that $X\in \cS_{\Gamma}^f$. We will fix an order in the set of vertices of $\Gamma$. This order yields an 
orientation in each $X\in \cS_{\Gamma}^f$. Let $\KK$ be a field (of arbitrary characteristic at this point). 
The $\KK$-chain complex $C_*^f(\Gamma)$ can be constructed as follows.
\begin{equation}
\label{eq:flag}
\array{l}
C^f_k(\Gamma)={\displaystyle{\sum_{{\tiny{\array{c}X\in S_{\Gamma}^f\\|V_X|=k+1\endarray}}}}} c_X\KK
\endarray
\end{equation}
with differential 
$$\partial_k(c_X)=\sum_{v\in X} \langle X_v|X\rangle c_{X_v},$$
where $X_v$ results from $X$ after deleting the vertex $v\in X$ and $\langle X_v|X\rangle$ is the incidence of $X_v$ in 
$X$ and it is $1$ or $-1$ according to the orientation given in $X$, namely, if $X=\langle v_0,\dots,v_k\rangle$, then
$\langle X_{v_i}|X\rangle=(-1)^i$.

This simplicial complex will be called the \emph{finite type flag complex} 
$\cF^f(\Gamma)$ associated with~$\Gamma$. 

\subsection{The Salvetti complex of an FC-type graph}
\label{sec:salvetti}
The Salvetti complex $\Sal(\Gamma)$ of an FC-type graph $\Gamma$ can be briefly defined as the 2-presentation 
complex associated to the presentation~\eqref{eq:artin-pres} of the Artin-Tits group $\A_\Gamma$ after attaching 
higher dimensional cells for each complete subgraph of~$\Gamma$. As a first approximation its 0-skeleton is given 
by a unique cell $\sigma_\emptyset$, its 1-skeleton is given by closed 1-cells $\sigma_{v}$, $v\in V$, and its 
2-skeleton is given by 2 cells $\sigma_{e}$, $e\in E$. The differential of the Salvetti complex is zero in the case when $\A_\Gamma$ is even. 

\subsection{Artin kernels and the equivariant $\partial^\chi$-complex}
\label{sec:twisted-complex}
Consider a non-trivial morphism $\chi:A_\Gamma\to \ZZ$ for an even Artin group $\A_\Gamma$. 
The kernel of this homomorphism is called the \emph{Artin kernel} of $\A_\Gamma$ associated with $\chi$ and will 
be denoted by $A^\chi_\Gamma$. We will say $\chi$ is \emph{resonant} if $\chi(g_v)= 0$ for some $v\in V_\Gamma$, 
otherwise $\chi$ will be called \emph{non-resonant}.
Let us denote $m_v:=\chi(g_v)$ so that an Artin kernel is represented by a tuple $(m_v)_{v\in V}$.

Note that the abelianization of $\A_\Gamma$ is a free abelian group $H_\Gamma:=\A_\Gamma/\A'_\Gamma$ of 
rank $|V|$ and hence the universal abelian cover of $\Sal(\Gamma)$ is given by a cell decomposition 
$\tilde C_k(\Sal(\Gamma))=C_k(\Sal(\Gamma))\times \KK[H_\Gamma]$ where $\KK[H_\Gamma]=\KK[t_v^{\pm 1},v\in V]$ 
is the group algebra of $H_\Gamma$ over $\KK$. The action of $t_v\in H_\Gamma$ on $g_w$ is given by conjugation 
and it is represented as $t_v*g_w=g_vg_wg_v^{-1}$ and one can check that it does not depend on the choice of 
representative in $H_\Gamma$.

If $\chi$ is surjective, then it determines an infinite cyclic cover of $\Sal(\Gamma)$ which will be denoted as 
$\Sal^\chi(\Gamma)$ and whose chain complex $C_k(\Sal^\chi(\Gamma))$ has
\begin{equation}
\label{eq:action}
C_k(\Sal^\chi(\Gamma))=\tilde C_k(\Sal(\Gamma))\otimes_{\KK[H_\Gamma]}\KK[t^{\pm 1}].
\end{equation}
Here, $\KK[t^{\pm 1}]$ is a $\KK[H_\Gamma]$-module by the action $t_v*1=t^{m_v}$, where $t$ geometrically 
represents the action on the cyclic cover associated with the choice of a generator of $\im \chi$. 

This complex is called the \emph{equivariant $\partial^\chi$-complex} associated with $\Gamma$ and~$\chi$.

\begin{obs}
\label{rem:epi}
From the previous discussion note that, without loss of generality, one can assume that $\chi$ is an epimorphism,
that is $\gcd\{m_v\mid v\in V\}=1$. Otherwise, $\im \chi = d\ZZ$ for $d=\gcd\{m_v\mid v\in V\}$, the action will 
be given by $t^d$ and $C_k(\Sal^\chi(\Gamma))\cong C_k(\Sal^{\frac{1}{d}\chi}(\Gamma))$ where 
$\frac{1}{d}\chi$ is now an epimorphism.
\end{obs}

It is obvious from this description that the universal abelian cover could have been avoided altogether, however, 
it is sometimes more convenient from a conceptual point of view to present the cyclic covers this way. The universal 
abelian cover notation will be used throughout the paper to simplify some formulas.

Since $\Sal(\Gamma)$ is an Eilenberg-MacLane space, $\Sal^\chi(\Gamma)$ is an Eilenberg-MacLane space as well and thus 
$H_k(\Sal^\chi(\Gamma))=H_k(A^\chi_\Gamma)$ is a $\KK[t^{\pm 1}]$-module.

Let us look at the structure of $\Sal^\chi(\Gamma)$ in more detail. First, the 0-skeleton is given by the orbit of a 0-cell
$\sigma^\chi_{\emptyset}$ by the cyclic group $A_\Gamma/\ker\chi$,
that is, $t^n\sigma^\chi_{\emptyset}$, where $\sigma^\chi_{\emptyset}$ is a 
choice of a preimage of $\sigma_{\emptyset}$ by the cyclic cover. Hence $C_0(\Sal^\chi(\Gamma))=\KK[t^{\pm 1}]\sigma^\chi_{\emptyset}$.

The 1-skeleton of $\Sal^\chi(\Gamma)$ is given by the 1-cells $t^n \sigma^\chi_v$, where $\sigma^\chi_v$ is a
choice of 1-cell in the preimage of $\sigma_v$ by the cyclic cover. Note that $\sigma^\chi_v$ is not a closed cell anymore and 
its boundary map is given by $\partial^\chi_1 \sigma^\chi_v=(t_v-1)\sigma^\chi_{\emptyset}=(t^{m_v}-1)\sigma^\chi_{\emptyset}$.

Analogously, the 2-skeleton of $\Sal^\chi(\Gamma)$ is given by the 2-cells $t^n \sigma^\chi_e$, $e=\langle v,w\rangle\in E$ and 
$$
\array{rcl}
\partial^\chi_2 \sigma^\chi_e &=&\left[ (t_v-1) \sigma^\chi_w - (t_w-1) \sigma^\chi_v \right] q_{\tilde\ell(e)}(t_vt_w),\\
&=&\left[ (t^{m_v}-1) \sigma^\chi_w - (t^{m_w}-1) \sigma^\chi_v \right] q_{\tilde\ell(e)}(t^{m_e})
\endarray
$$
where $q_k(x)=\frac{x^{k}-1}{x-1}$, $m_e=m_v+m_w$ and $\ell(e)=2\tilde\ell(e)$. Note that this map is sensitive to the orientation given
to $e=\langle v,w\rangle$.

Finally, the $k$-skeleton of $\Sal^\chi(\Gamma)$ is given by the $k$-cells $t^n \sigma^\chi_X$, $X\in \cS^f$ and 
\begin{equation}
\label{eq:dk}
\array{rcl}
\partial^\chi_k \sigma^\chi_X &=& \sum_{v\in X} \langle X_v|X\rangle (t_v-1) 
\left[\prod_{w\in X_v} q_{\tilde\ell(v,w)}(t_vt_w)\right] \sigma^\chi_{X_v}.\\
\endarray
\end{equation}

\section{On the homology of the equivariant $\partial^\chi$-complex with coefficients in an arbitrary field}
\label{sec:complex}
Recall that $\chi$ is an epimorphism (see Remark~\ref{rem:epi}) and consider the equivariant $\partial^\chi$-complex
$(C^\chi_{*}(\Gamma),\partial^\chi_*)=(C_{*+1}(\Sal^\chi(\Gamma)),\partial_{*+1}^\chi)$ 
associated with $\Gamma$ and~$\chi$ as described in~\S\ref{sec:twisted-complex}
\begin{equation}
\label{eq:dcomplex}
\array{crccc}
...\to & C_k^\chi(\Gamma)
& \buildrel{\partial^\chi_k}\over\to & 
C_{k-1}^\chi(\Gamma)
& \to ... \\
& \sigma^\chi_X & \mapsto & 
\sum_{v\in X} \langle X_v|X\rangle (t^{m_v}-1) \left[ \prod_{w\in X_v} q_{\tilde\ell(v,w)}(t^{m_e}) \right]\sigma^\chi_{X_v}
&
\endarray
\end{equation}

\begin{prop}
\label{prop:isom}
The following isomorphism holds as $\KK[t^{\pm 1}]$-modules
$$
H_k(C_*^\chi(\Gamma),\partial^\chi)=H_{k+1}(\A^\chi_\Gamma;\KK).
$$
\end{prop}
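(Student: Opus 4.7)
The plan is essentially a bookkeeping exercise combining two facts already recalled earlier: the asphericity of the Salvetti complex for FC-type Artin-Tits groups and the definition of $(C^\chi_*(\Gamma),\partial^\chi)$ as a degree shift of the cellular chain complex of $\Sal^\chi(\Gamma)$. I do not expect any genuine obstacle; the proof is almost immediate once one tracks the indexing carefully.

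First I would establish that $\Sal^\chi(\Gamma)$ is an Eilenberg-MacLane space for $\A^\chi_\Gamma$. By the $K(\pi,1)$ conjecture for FC-type Artin-Tits groups (see \S\ref{sec:salvetti} and \cite{Charney-kpi1}), $\Sal(\Gamma)$ is a $K(\A_\Gamma,1)$. Since $\chi$ is assumed to be an epimorphism (Remark~\ref{rem:epi}), the covering $\Sal^\chi(\Gamma)\to\Sal(\Gamma)$ is connected and regular, with deck group $\im\chi\cong\ZZ$ generated by $t$, and its fundamental group is precisely $\A^\chi_\Gamma=\ker\chi$. A connected cover of an aspherical CW-complex is aspherical, so $\Sal^\chi(\Gamma)$ is a $K(\A^\chi_\Gamma,1)$ and its cellular homology with coefficients in $\KK$ computes $H_*(\A^\chi_\Gamma;\KK)$. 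The $\ZZ$-equivariance of the cellular boundary maps under the deck action ensures that this identification respects the $\KK[t^{\pm1}]$-module structure appearing in the statement.

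The remaining step is purely a matter of indexing. By definition, $C^\chi_k(\Gamma)=C_{k+1}(\Sal^\chi(\Gamma))$ with boundary $\partial^\chi_k=\partial^\chi_{k+1}$, and the explicit formula~\eqref{eq:dk} describing $\partial^\chi_{k+1}$ on the cells $\sigma^\chi_X$ recorded in \S\ref{sec:twisted-complex} is exactly the one reproduced in~\eqref{eq:dcomplex}. Taking homology of this re-indexed complex then yields
\[
H_k(C^\chi_*(\Gamma),\partial^\chi)=H_{k+1}(C_*(\Sal^\chi(\Gamma)),\partial^\chi)=H_{k+1}(\Sal^\chi(\Gamma);\KK)=H_{k+1}(\A^\chi_\Gamma;\KK),
\]
as claimed. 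The only subtlety worth flagging is keeping the off-by-one shift consistent so that $H_0(C^\chi_*)$ corresponds to $H_1(\A^\chi_\Gamma;\KK)$; since the complex~\eqref{eq:dcomplex} is displayed as extending in both directions, this is handled by the standard chain-complex convention and requires no ad hoc truncation argument.
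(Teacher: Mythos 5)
Your proposal is correct and follows exactly the route the paper intends: the paper gives no separate proof of Proposition~\ref{prop:isom}, treating it as immediate from the discussion in \S\ref{sec:twisted-complex} (asphericity of $\Sal(\Gamma)$ and hence of its cyclic cover $\Sal^\chi(\Gamma)$, whose equivariant cellular chain complex, shifted by one degree and with the $\KK[t^{\pm1}]$-structure coming from the deck action, is by definition $(C^\chi_*(\Gamma),\partial^\chi)$). Your handling of the degree shift, including the degree $-1$ term so that $H_0(C^\chi_*)\cong H_1(\A^\chi_\Gamma;\KK)$, matches the paper's convention.
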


As a consequence, the homology groups $H_{k+1}(\A^\chi_\Gamma;\KK)$ can be seen as $\KK[t^{\pm1}]$-modules. 
Since $\KK[t^{\pm1}]$ is a principal ideal domain, these modules decompose as a torsion part and a free part. 
In the rest of this section we will see how to determine the free part at least in some cases and will prove 
some useful results about the torsion part.

Let us use the following notation. Recall that we are denoting $m_v=\chi(v)$ and $t_v=t^{m_v}$ for $v\in V$ 
and $t_e=t_vt_w$ for $e=\{v,w\}$ an edge in~$\Gamma$.

Define the following resonance set of simplices $\cR(\Gamma,\chi,\KK)=V_\cR\cup E_\cR$, where
$$
V_\cR:=\{v\in V \mid m_v=0\}\quad \quad \textrm{ and } \quad \quad 
E_\cR:=\{e\in E\mid m_e=0 \textrm{ and } \tilde\ell(e)\cdot 1_{\KK}= 0\}.
$$
Note that $t_v-1\not\equiv 0$ if and only if $v\notin V_\cR$ and  $q_{\tilde\ell(e)}(t_e)\not\equiv 0$ 
if and only if $e\notin E_\cR$. 

\begin{dfn} For $X\in \cS^f$, 
$$\p_X:=\prod_{v\in V_X\setminus V_\cR} (t_v-1) \quad \quad \textrm{ and } \quad \quad 
\q_X:=\prod_{e\in E_X\setminus E_\cR} q_{\tilde\ell(e)}(t_e).$$ 
If $\bar X=\{X_1,\ldots,X_r\}$ is a set of elements in $\cS^f$ we also use the following notation
$$\p_{\bar X}:=\prod_{i=1}^r p_{X_i}\quad \quad \textrm{ and } \quad \quad \q_{\bar X}:=\prod_{i=1}^r q_{X_i}.$$ 
\end{dfn}

Then $\p_X\q_X\not\equiv 0$ for any $X\in \cS^f$. Therefore one can formally rewrite~\eqref{eq:dcomplex} as
\begin{equation}
\label{eq:delta}
\frac{1}{\p_X\q_X}\partial^\chi_k\sigma^\chi_X=
\sum_{\tiny{\array{c}Y\subset X\\ |Y|=k\\ \text{non-resonant}\endarray}} \langle Y|X\rangle \frac{1}{\p_Y\q_Y}\sigma^\chi_Y,
\end{equation}
where the sum is taken over the non-resonant $Y\subset X$, that is, $v\notin V_\cR$ for any $v\in X\setminus Y$, 
and $e=\{v,w\}\notin E_\cR$ for any $v\in X\setminus Y$ and $w\in Y$.

\begin{dfn} 
We say $\chi$ is \emph{$\KK$ non-resonant} if $\cR(\Gamma,\chi,\KK)=\emptyset$. 
\end{dfn}

Note that if $\chi$ is $\KK$ non-resonant, for $X\in \cS^f$, one obtains
$$\p_X:=\prod_{v\in V_X} (t_v-1)\quad \quad \textrm{ and } \quad \quad \q_X:=\prod_{e\in E_X} q_{\tilde\ell(e)}(t_e).$$ 

Note that if $\KK$ has characteristic zero, then a character is $\KK$ non-resonant if and only if it is non-resonant.

For the rest of this section we will fix a field $\KK$ of arbitrary characteristic and 
assume that the character $\chi$ is $\KK$ non-resonant.

\subsection{The free part in the $\KK$ non-resonant case}
\begin{teo}
\label{teo:freepart}{\cite{Papadima-Suciu-Toric}}
Let $\chi:A_\Gamma\to \ZZ$ be a $\KK$ non-resonant epimorphism. If $A^\chi_\Gamma:=\ker \chi$, then 
the free part of $H_{k+1}(A^\chi_\Gamma;\KK)$ as a $\KK[t^{\pm 1}]$-module has rank 
$r_k:=\dim_\KK \tilde H_k(\cF^f(\Gamma);\KK)$.
\end{teo}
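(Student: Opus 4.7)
The plan is to compute the rank of the free part of the $\KK[t^{\pm 1}]$-module $H_{k+1}(A^\chi_\Gamma;\KK)$ by localizing at the fraction field $\KK(t)$. Since $\KK[t^{\pm 1}]$ is a PID, the rank of the free summand of any finitely generated module $M$ coincides with $\dim_{\KK(t)}(M\otimes_{\KK[t^{\pm 1}]}\KK(t))$. Combining this with Proposition~\ref{prop:isom}, the task reduces to identifying $H_k(C^\chi_*(\Gamma)\otimes_{\KK[t^{\pm 1}]}\KK(t))$ with $\tilde H_k(\cF^f(\Gamma);\KK)\otimes_\KK\KK(t)$.

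The key observation is that under the $\KK$ non-resonance hypothesis, both $\p_X$ and $\q_X$ are non-zero in $\KK[t^{\pm 1}]$ for every $X\in\cS^f$, and therefore units in $\KK(t)$. This permits the change of basis $\tau_X:=\frac{1}{\p_X\q_X}\sigma^\chi_X$ once we pass to $\KK(t)$. Using the ratios $\p_X/\p_{X_v}=t_v-1$ and $\q_X/\q_{X_v}=\prod_{w\in X_v}q_{\tilde\ell(v,w)}(t_e)$ for each $v\in X$, formula~\eqref{eq:dk} rewrites in the rescaled basis as
$$
\partial^\chi_k \tau_X=\sum_{v\in X}\langle X_v|X\rangle\,\tau_{X_v},
$$
which is exactly the simplicial differential of the finite type flag complex $C^f_*(\Gamma)$ defined in~\S\ref{sec:flag}. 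Hence the rescaling yields a chain isomorphism
$$
C^\chi_*(\Gamma)\otimes_{\KK[t^{\pm 1}]}\KK(t)\;\cong\;C^f_*(\Gamma)\otimes_\KK\KK(t).
$$

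Since $\KK(t)$ is a flat $\KK[t^{\pm 1}]$-module, base change commutes with homology, so
$$
H_k(C^\chi_*(\Gamma))\otimes_{\KK[t^{\pm 1}]}\KK(t)\;\cong\;\tilde H_k(\cF^f(\Gamma);\KK)\otimes_\KK\KK(t),
$$
whose $\KK(t)$-dimension is $r_k$. By Proposition~\ref{prop:isom}, this gives the rank of the free summand of $H_{k+1}(A^\chi_\Gamma;\KK)$, as claimed.

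The principal verification to carry out carefully is the combinatorial identity matching the quotient $\p_X\q_X/(\p_{X_v}\q_{X_v})$ with the coefficient $(t^{m_v}-1)\prod_{w\in X_v}q_{\tilde\ell(v,w)}(t^{m_e})$ appearing in~\eqref{eq:dk}. The degree shift and the bookkeeping of the empty simplex $\sigma^\chi_\emptyset$, which corresponds to the augmentation degree of $C^f_*(\Gamma)$ (so that its homology is reduced), must also be tracked consistently, but these are not genuine obstacles.
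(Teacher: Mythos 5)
Your proposal is correct and follows essentially the same route as the paper: the paper's proof likewise passes to $\KK(t)$, rescales the basis to $\frac{1}{\p_X\q_X}\sigma^\chi_X$ (this is exactly its equation~\eqref{eq:delta}), observes that the resulting boundary matrices are the incidence matrices of $\cF^f(\Gamma)$, and concludes that the rank of the free part is $\dim_\KK\tilde H_k(\cF^f(\Gamma);\KK)$. Your explicit appeal to flatness of $\KK(t)$ and the remark about the augmentation degree for $\sigma^\chi_\emptyset$ only make the same argument slightly more detailed.
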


\begin{proof}
For simplicity let us denote by $F=\cF^f(\Gamma)$ the finite type flag complex of $\Gamma$, by $F_k=\cF^f_k(\Gamma)$ its 
set of $k$-simplices, and by $C_k=C^f_k(\Gamma)$ the free abelian group generated by $F_k$. Note that 
$\left( \frac{1}{\p_X\q_X}\sigma^\chi_X\right)_{X\in F_k}$ is a basis of the vector space $C_k\otimes \KK(t)$. 
Analogously, $\left( \sigma_X\right)_{X\in F_k}$ a basis of $C_k\otimes \KK$. Note that both spaces have the same
dimension over their respective fields, moreover by~\eqref{eq:delta} and since $\chi$ is $\KK$ non-resonant, 
both boundary maps are given by the incidence matrix $(\langle Y|X\rangle)_{X,Y}$, where $\langle Y|X\rangle$ is defined 
for $X\in C_k$, $Y\in C_{k-1}$ and is given as in~\eqref{eq:dk} if $Y\subset X$ and as 0 otherwise. Hence the result follows.
\end{proof}

\subsection{A resolution matrix}
We will use the notation $M^\chi_k(t)$ to denote the matrix of the homomorphism $\partial^\chi_k$ of 
$\KK[t^{\pm 1}]$-modules with respect to the natural bases $C^\chi_k(\Gamma)$ and $C^\chi_{k-1}(\Gamma)$. 
Also, $M_k$ will represent the matrix with respect to the analogous basis over~$\KK$ of the 
homomorphism $\partial_k$ of the complex 
$(C^\chi_*(\Gamma),\partial^\chi_*)$ 
defined in~\S\ref{sec:flag}.

In order to give formulas for the torsion we will study the set of invariants of the matrices $M^\chi_k(t)$.
This is a consequence of the following well-known result~\cite{ACLMM-Artin}

\begin{lem}
\label{lem:fitting}
The torsion part of $H_{k+1}(A^\chi_\Gamma;\KK)$ coincides with the torsion part of $\coker  \partial^\chi_{k+1}$.
In particular, the non-trivial invariant factors of $M^\chi_{k+1}(t)$ determine the torsion part 
of~$H_{k+1}(A^\chi_\Gamma;\KK)$.
\end{lem}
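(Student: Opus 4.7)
The plan is to exploit the short exact sequence relating homology, cokernel, and image, together with the fact that $\KK[t^{\pm 1}]$ is a principal ideal domain. By Proposition~\ref{prop:isom} it suffices to identify the torsion of $H_k(C_*^\chi(\Gamma), \partial^\chi) = \ker \partial^\chi_k / \im \partial^\chi_{k+1}$ with that of $\coker \partial^\chi_{k+1} = C^\chi_k / \im \partial^\chi_{k+1}$.

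First I would write down the tautological short exact sequence of $\KK[t^{\pm 1}]$-modules
$$0 \to \ker \partial^\chi_k / \im \partial^\chi_{k+1} \to C^\chi_k / \im \partial^\chi_{k+1} \to C^\chi_k / \ker \partial^\chi_k \to 0.$$
The rightmost quotient is isomorphic to $\im \partial^\chi_k$, which is a submodule of the free $\KK[t^{\pm 1}]$-module $C^\chi_{k-1}$. Since $\KK[t^{\pm 1}]$ is a PID, submodules of free modules are free, so $\im \partial^\chi_k$ is torsion-free. Applying the (left-exact) torsion functor to the sequence above then shows that the torsion submodule of $\coker \partial^\chi_{k+1}$ maps isomorphically onto the torsion submodule of $H_k(C_*^\chi)$, proving the first claim.

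For the final assertion, I would invoke the structure theorem for finitely presented modules over a PID, equivalently the Smith normal form of $M^\chi_{k+1}(t)$, to decompose
$$\coker \partial^\chi_{k+1} \cong \KK[t^{\pm 1}]^{r} \oplus \bigoplus_i \KK[t^{\pm 1}]/(f_i),$$
where the $f_i$ are the non-unit, non-zero invariant factors of $M^\chi_{k+1}(t)$ and $r$ corresponds to the zero (or missing) invariant factors. The torsion summand here is precisely the contribution of the non-trivial invariant factors, and by the first part this torsion coincides with the torsion of $H_{k+1}(A^\chi_\Gamma;\KK)$. There is no real obstacle in this argument: the only nontrivial observation is that $\im \partial^\chi_k$ is torsion-free, which comes for free because $\KK[t^{\pm 1}]$ is a PID.
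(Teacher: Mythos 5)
Your proof is correct and follows essentially the same route as the paper: the same tautological short exact sequence relating $H_k(C^\chi_*)$, $\coker\partial^\chi_{k+1}$, and $\im\partial^\chi_k$, the same key observation that $\im\partial^\chi_k$ is free (hence torsion-free) as a submodule of a free module over the PID $\KK[t^{\pm 1}]$, and the structure theorem applied to the presentation matrix $M^\chi_{k+1}(t)$ for the second claim.
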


\begin{proof}
The short exact sequence
$$
0\to H_{k+1}(A^\chi_\Gamma;\KK)=H_k(C^\chi_*(\Gamma),\partial^\chi_*) \to \frac{C^\chi_k(\Gamma)}{\im \partial^\chi_{k+1}}
\to \frac{C^\chi_k(\Gamma)}{\ker \partial^\chi_{k}}\cong \im \partial^\chi_{k}\to 0
$$
follows from Proposition~\ref{prop:isom}. The right-most term is free since it is a submodule of $C^\chi_{k-1}(\Gamma)$,
which is a free module over a PID, hence the first part follows. The second part is a consequence of the structure 
theorem for modules over a PID and the fact that $M^\chi_{k+1}(t)$ is the free presentation matrix 
of~$\coker \partial^\chi_{k+1}$.
\end{proof}

\subsection{The Fitting ideals of $M^\chi_{k+1}(t)$}
By Lemma~\ref{lem:fitting} it is enough to calculate the invariant factors of $M^\chi_{k+1}(t)$, or equivalently, 
its Fitting ideals. Recall that the $s$-th Fitting ideal $I_s$ associated with an $R$-module $U$ is given as the 
ideal generated by the minors of size $r\times r$ for $r=m-s$ of any free presentation matrix $M$ of $U$, that is, 
$R^n\rightmap{M} R^m\to U\to 0$. To see this, recall that 
$$I_s\subseteq I_{s+1}.$$
If $R$ is a PID and we write $I_s=f_sR$, then we have $f_{s+1}\mid f_s$ and the invariant factors of $M$ are the 
elements $g_s:={f_s\over f_{s+1}}$. These ideals yield the usual a decomposition of $U$ as a sum of a free $R$-module 
and modules of the form $R/g_sR$.

Note that a square submatrix of $M^\chi_{k+1}(t)$ of size $r\times r$ is determined  
by the choice of $r$ $(k+1)$-simplices and $r$ 
($k$)-simplices. We will denote such a submatrix by $S_{(\bar X,\bar Y)}$, where $\bar X=\{X_1,\dots,X_r\}$ 
(resp. $\bar Y=\{Y_1,\dots,Y_r\}$) is a list of $(k+1)$-simplices (resp. ($k$)-simplices).
We define by $\mathfrak{m}^\chi_{(\bar X,\bar Y)}$ (resp. $\mathfrak{m}_{(\bar X,\bar Y)}$) the minors 
$\det(S_{(\bar X,\bar Y)})$ of the matrix $M^\chi_{k+1}(t)$ (resp. $M_{k+1}$). One has the following immediate properties.

\begin{prop}
\mbox{}
\begin{enumerate}
\item\label{proper:minor1}
If $Y_i\not\subset X_j$ for some $j$ and any $i=1,\dots,r$, then 
$\mathfrak{m}^\chi_{(\bar X,\bar Y)}=\mathfrak{m}_{(\bar X,\bar Y)}=0$.
Analogously, if $Y_i\not\subset X_j$ for some $i$ and any $j=1,\dots,r$, then 
$\mathfrak{m}^\chi_{(\bar X,\bar Y)}=\mathfrak{m}_{(\bar X,\bar Y)}=0$.
\item\label{proper:minor2} 
If $\bar X$ contains a $(k+1)$-cycle, that is, $\sigma:=\sum_{i=1}^r \lambda_i \sigma_{X_i}$ for some non-trivial choice 
$\lambda_1,\dots,\lambda_n\in \KK$ and $\partial^\chi_{k+1}(\sigma)=0$, then 
$\mathfrak{m}^\chi_{(\bar X,\bar Y)}=\mathfrak{m}_{(\bar X,\bar Y)}=0$.
\end{enumerate}
\end{prop}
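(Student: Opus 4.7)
For part (a), the argument is purely combinatorial. Directly from~\eqref{eq:dk} (and from the definition of the flag complex differential in~\S\ref{sec:flag}), the entry of $M^\chi_{k+1}$ (respectively of $M_{k+1}$) at row $Y$ and column $X$ carries a factor $\langle Y|X\rangle$, which vanishes unless $Y$ is a face of $X$. Hence if for some column index $X_j$ no row index $Y_i$ is a face of $X_j$, the $j$-th column of the submatrix $S_{(\bar X,\bar Y)}$ is identically zero, forcing the minor to vanish; the case of a row not lying below any column is symmetric.

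For part (b), the plan is first to translate the cycle hypothesis into a column dependence of $M^\chi_{k+1}$ and then to transfer this dependence to $M_{k+1}$. Written in the natural basis $\{\sigma^\chi_X\}$, the vanishing of $\partial^\chi_{k+1}(\sigma)$ is exactly the statement that the columns of $M^\chi_{k+1}$ indexed by $X_1,\ldots,X_r$ satisfy a non-trivial $\KK$-linear relation with coefficients $\lambda_1,\ldots,\lambda_r$. Since $\KK\subset\KK[t^{\pm1}]$, this is also a $\KK[t^{\pm1}]$-linear dependence and is preserved under restriction to any row set $\bar Y$, so the determinant of the $r\times r$ submatrix $S^\chi_{(\bar X,\bar Y)}$ vanishes, giving $\mathfrak{m}^\chi_{(\bar X,\bar Y)}=0$.

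To obtain simultaneously $\mathfrak{m}_{(\bar X,\bar Y)}=0$, the key step is to record the entry-wise relation between the two matrices. Comparing~\eqref{eq:dk} with the product expressions for $\p_X\q_X$ one extracts the identity
\[
M^\chi_{k+1}(Y,X)=\langle Y|X\rangle\,\frac{\p_X\q_X}{\p_Y\q_Y},
\]
so that the submatrix $S^\chi_{(\bar X,\bar Y)}$ factors as $D_{\bar Y}^{-1}S_{(\bar X,\bar Y)}D_{\bar X}$, where $D_{\bar X}$ and $D_{\bar Y}$ are diagonal with entries $\p_{X_i}\q_{X_i}$ and $\p_{Y_i}\q_{Y_i}$ respectively. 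The standing $\KK$ non-resonance hypothesis ensures these diagonal entries are nonzero in $\KK[t^{\pm1}]$, hence
\[
\mathfrak{m}^\chi_{(\bar X,\bar Y)}=\frac{\p_{\bar X}\q_{\bar X}}{\p_{\bar Y}\q_{\bar Y}}\,\mathfrak{m}_{(\bar X,\bar Y)}
\]
is an equality of rational functions with a nonzero prefactor, and the two minors vanish together. I expect no serious obstacle here: the proof is bookkeeping around the explicit formula~\eqref{eq:dk}, with the only mildly delicate point being that $\KK$ non-resonance is used precisely where we need to divide by $\p_Y\q_Y$; without that hypothesis the comparison would degenerate and $\mathfrak{m}_{(\bar X,\bar Y)}=0$ would require a separate argument.
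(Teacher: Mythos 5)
Your proposal is correct and is exactly the ``immediate'' argument the paper has in mind: it states this proposition without proof, and the natural justification is precisely yours (entries of $M^\chi_{k+1}$ and $M_{k+1}$ are supported on codimension-one faces, so a column or row with no incidences kills the minor, and a $\KK$-linear dependence among the columns indexed by $\bar X$ restricts to the submatrix $S_{(\bar X,\bar Y)}$). Your entrywise factorization $\mathfrak{m}^\chi_{(\bar X,\bar Y)}=\frac{\p_{\bar X}\q_{\bar X}}{\p_{\bar Y}\q_{\bar Y}}\,\mathfrak{m}_{(\bar X,\bar Y)}$, used to transfer the vanishing to the untwisted minor, is exactly part \textrm{(i)} of the subsequent Proposition~\ref{prop:mXY}, derived directly from~\eqref{eq:delta} under the standing $\KK$ non-resonance assumption, so there is no circularity and no gap.
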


In order to characterize the choices of $(\bar X,\bar Y)$ whose associated minor $\mathfrak{m}^\chi_{(\bar X,\bar Y)}$ is
non zero we need the following. 

\begin{dfn} 
Let $\bar X$ (resp. $\bar Y$) be a list of ($k+1$)-simplices (resp. $k$-simplices) in the finite type 
flag complex $\cF^f$. Consider 
$N(\bar X):=\cF^f_{k}\bigcup \cup_{\sigma_{k+1}\in \bar X} \sigma_{k+1}$
and 
$N(\bar Y^c):=\cF^f_{(k-1)}\cup \bigcup_{\sigma_{k}\notin \bar Y} \sigma_{k}$.
We say $(\bar X,\bar Y)$ is \emph{$(k+1)$-acyclic of order $r$} (or simply \emph{acyclic of order $r$}) if 
$(N(\bar X),N(\bar Y^c))$ is acyclic and $|\bar X|=|\bar Y|=r$.
\end{dfn}

Using the notation above one has the following result on the minors of $M^\chi_{k+1}(t)$ in~$\KK[t]$.

\begin{prop}
\label{prop:mXY}
Let $\chi:A_\Gamma\to\ZZ$ be a $\KK$ non-resonant morphism and let $\mathfrak{m}^\chi_{(\bar X,\bar Y)}$ be a minor of 
size $r\times r$ of the matrix $M^\chi_{k+1}(t)$ associated with the pair $(\bar X,\bar Y)$. Then
\begin{enumerate}[label=\textrm{(\roman*)}]
 \item \label{prop:mXY1}
 $$\mathfrak{m}^\chi_{(\bar X,\bar Y)}=\frac{\p_{\bar X}\q_{\bar X}}{\p_{\bar Y}\q_{\bar Y}}\mathfrak{m}_{(\bar X,\bar Y)}.$$
 \item \label{prop:mXY2}
 $\mathfrak{m}^\chi_{(\bar X,\bar Y)}$ is non-zero if and only if $(\bar X,\bar Y)$ is acyclic of order $r$.
 \item \label{prop:mXY3}
The biggest possible size $r$ such that $\mathfrak{m}^\chi_{(\bar X,\bar Y)}\neq 0$ is $r=\dim_\KK\im \partial_{k+1}$.
\end{enumerate}
\end{prop}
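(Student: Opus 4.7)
The plan is to prove the three items in sequence: (i) is a direct matrix manipulation, (ii) follows from a homological reinterpretation of $\mathfrak{m}_{(\bar X,\bar Y)}$, and (iii) is immediate once (i) and (ii) are in hand.

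For (i), the starting point is formula~\eqref{eq:delta}. Multiplying both sides of the identity $\frac{1}{\p_X\q_X}\partial^\chi_{k+1}\sigma^\chi_X = \sum_{Y} \langle Y|X\rangle \frac{1}{\p_Y\q_Y}\sigma^\chi_Y$ by $\p_X\q_X$ identifies the $(Y,X)$-entry of $M^\chi_{k+1}(t)$ as $\langle Y|X\rangle\,\p_X\q_X/\p_Y\q_Y$. Then in the $r\times r$ submatrix $S_{(\bar X,\bar Y)}$ I would pull out the factor $\p_X\q_X$ from each column indexed by $X\in \bar X$ and the factor $1/(\p_Y\q_Y)$ from each row indexed by $Y\in \bar Y$. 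What remains is the incidence submatrix whose determinant is $\mathfrak{m}_{(\bar X,\bar Y)}$, and multiplicativity of determinants under row/column scaling yields the formula.

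For (ii), the $\KK$ non-resonance hypothesis ensures every $\p_X$ and $\q_X$ is non-zero, so part (i) already reduces the statement to: $\mathfrak{m}_{(\bar X,\bar Y)}\neq 0$ iff $(\bar X,\bar Y)$ is acyclic of order $r$. I would then identify $\mathfrak{m}_{(\bar X,\bar Y)}$ as the determinant of the unique non-trivial differential of the relative chain complex $C_*(N(\bar X), N(\bar Y^c))$. The two subcomplexes agree in dimensions $\leq k-1$, and $N(\bar X)$ has no cells of dimension $>k+1$; hence the relative complex is concentrated in degrees $k+1$ and $k$, where it reads $\KK^{\bar X}\to \KK^{\bar Y}$ with differential given precisely by the restriction of the incidence matrix $M_{k+1}$ to rows $\bar Y$ and columns $\bar X$. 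Acyclicity of the pair is then equivalent to this square map being invertible, hence to $\mathfrak{m}_{(\bar X,\bar Y)}\neq 0$.

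Finally, (iii) will follow from the standard fact that the largest size of a non-zero minor of $M_{k+1}$ equals $\mathrm{rank}_\KK M_{k+1} = \dim_\KK \im \partial_{k+1}$, combined with part (i), which says a minor of $M^\chi_{k+1}(t)$ vanishes iff the corresponding minor of $M_{k+1}$ does. The main obstacle I foresee is purely organizational: correctly setting up the relative complex $C_*(N(\bar X), N(\bar Y^c))$ so that its only differential is exactly the square block $S_{(\bar X,\bar Y)}$, which is what pins down why acyclicity is the right combinatorial condition. Once that identification is firmly made, every other step is either linear algebra over a PID or a direct consequence of~\eqref{eq:delta}.
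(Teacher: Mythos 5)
Your proposal is correct and follows essentially the same route as the paper: part (i) by factoring the row/column scalings $\p_X\q_X$ and $1/(\p_Y\q_Y)$ out of the submatrix coming from~\eqref{eq:delta}, part (ii) by identifying $S_{(\bar X,\bar Y)}$ with the unique differential of the relative complex $C_*(N(\bar X),N(\bar Y^c))$ concentrated in degrees $k+1$ and $k$, and part (iii) from the equality of ranks of $M^\chi_{k+1}(t)$ and $M_{k+1}$. No gaps.
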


\begin{proof}
Part~\ref{prop:mXY1} is an immediate consequence of~\eqref{eq:delta} since $\chi$ is $\KK$ non-resonant. 
To prove part~\ref{prop:mXY2} note that 
$$C_i(N(\bar X),N(\bar Y^c))=\begin{cases}  
C_{k+1}(\bar X) & \textrm{ if } i=k+1,\\ 
C_{k}(\bar X,\bar Y^c) & \textrm{ if } i=k\\
0 & \textrm{ otherwise }\end{cases}$$
since $\bar X$ (resp. $\bar Y^c$) has dimension $k+1$ (resp. $k$). Hence $H_{k+1}(N(\bar X),N(\bar Y^c);\KK)=0$ is equivalent to 
asking $C_{k+1}(\bar X;\KK)\hookrightarrow C_{k}(\bar X,\bar Y^c;\KK)$. 
However, since they both have the same dimension and 
$\det(S_{(\bar X,\bar Y)})=\mathfrak{m}_{(\bar X,\bar Y)}$ this is in fact
an isomorphism whose associated matrix is $S_{(\bar X,\bar Y)}$ and the result follows.
Finally, the rank of $M_{k+1}^\chi$ is given by the dimension of $\im_\KK \partial_{k+1}$ and 
thus part~\ref{prop:mXY3} follows.
\end{proof}

\subsection{Torsion in $H_{k+1}(A^\chi_\Gamma;\KK)$}
Note that the discussion about the Fitting ideals above together with  Proposition~\ref{prop:mXY}\ref{prop:mXY1} 
imply that the torsion part $H_{k+1}(A^\chi_\Gamma;\KK)$ can be described in terms of  the $\s$-th cyclotomic polynomials 
$\Phi_\s(t)$ for $\KK$ for $\s$ dividing  either $m_v$ or $\tilde \ell(e)m_e$. 
Moreover, Proposition~\ref{prop:mXY}\ref{prop:mXY1} is also true even if the character is $\KK$ resonant. 
However, we consider the $\KK$ non-resonant case only to give a more detailed description in our next result.

\begin{teo}
\label{teo:torsionhk}
Consider $H_{k+1}(A^\chi_\Gamma;\KK)$ as a $\KK[t^{\pm 1}]$-module where $\chi:A_\Gamma\to \ZZ$ is $\KK$ non-resonant. 
Then 
$$
H_{k+1}(A^\chi_\Gamma;\KK)=\KK[t^{\pm 1}]^{r_k}
\oplus \left( \frac{\KK[t^{\pm 1}]}{(t-1)}\right)^{\dim_\KK\im \partial_{k+1}}
\bigoplus_{\s\in \TT_\Gamma} \bigoplus_{j=1}^\infty 
\left( \frac{\KK[t^{\pm 1}]}{\Phi_\s(t)^{j}}\right)^{n_{k,j}(\s)},
$$
for some $n_{k,j}(\s)\in \ZZ_{\geq 0}$, where $\Phi_\s(t)$ is the $\s$-th cyclotomic polynomial in $\KK[t]$,
$r_k:=\dim_\KK \tilde H_k(\cF^f(\Gamma);\KK)$, and
\begin{equation}
\label{eq:torus}
\array{lcl}
\mathbb T_\Gamma & = & \mathbb T_{V_\Gamma}\cup \mathbb T_{E_\Gamma},\\
\mathbb T_{V_\Gamma}&=&\bigcup_{v\in V_\Gamma}\{\s\in \ZZ_{>1} \mid m_v=0 \mod \s\}, \textrm{ and }\\
\mathbb T_{E_\Gamma}&=&\bigcup_{e\in E_\Gamma} \{\s\in \ZZ_{>1} \mid \tilde \ell(e)m_e=0 \mod \s, \textrm{ but } m_e\neq 0 \mod \s\}.
\endarray
\end{equation}
\end{teo}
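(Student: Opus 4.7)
The plan is to apply the structure theorem for finitely generated modules over the PID $R := \KK[t^{\pm 1}]$ to $H_{k+1}(A^\chi_\Gamma;\KK)$. The free part has rank $r_k$ directly by Theorem~\ref{teo:freepart}, so the task reduces to identifying the torsion part, which by Lemma~\ref{lem:fitting} is governed by the non-trivial invariant factors of $M^\chi_{k+1}(t)$; by Proposition~\ref{prop:mXY}\ref{prop:mXY3} there are exactly $r := \dim_\KK \im \partial_{k+1}$ of them.

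By Proposition~\ref{prop:mXY}\ref{prop:mXY1} every non-zero minor of $M^\chi_{k+1}(t)$ equals $\p_{\bar X}\q_{\bar X}/(\p_{\bar Y}\q_{\bar Y})$ times the scalar $\mathfrak{m}_{(\bar X,\bar Y)}\in\KK$. Using the identities $t^{m}-1=\prod_{d\mid m}\Phi_d(t)$ and $q_{k}(t^{m})=\prod_{\s\mid km,\,\s\nmid m}\Phi_\s(t)$, every such minor factors in $\KK[t]$ as a product of cyclotomic polynomials $\Phi_\s(t)$ whose index lies in $\{1\}\cup\TT_V\cup\TT_E=\{1\}\cup\TT_\Gamma$. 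Hence every Fitting-ideal generator, and therefore every invariant factor of $M^\chi_{k+1}(t)$, is a product of such $\Phi_\s(t)$'s, and the structure theorem yields a decomposition of the torsion as a direct sum of cyclic summands $R/\Phi_\s(t)^{j}$ with $\s\in\{1\}\cup\TT_\Gamma$ and $j\geq 1$.

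It remains to pin down the $\Phi_1(t)=t-1$ contribution. The idea is to compute the $(t-1)$-adic valuation of an arbitrary non-zero $r'\times r'$ minor for $r'\leq r$: in characteristic zero, $\KK$ non-resonance yields $v_{(t-1)}(t^{m_v}-1)=1$ for every vertex $v$ and $v_{(t-1)}(q_{\tilde\ell(e)}(t^{m_e}))=0$ for every edge $e$. Since each $(k+1)$-simplex contributes $k+2$ vertex factors to $\p_{\bar X}$ and each $k$-simplex contributes $k+1$ vertex factors to $\p_{\bar Y}$, the formula of Proposition~\ref{prop:mXY}\ref{prop:mXY1} gives $v_{(t-1)}(\p_{\bar X})=r'(k+2)$ and $v_{(t-1)}(\p_{\bar Y})=r'(k+1)$, while the $\q$-factors and the scalar $\mathfrak{m}_{(\bar X,\bar Y)}$ contribute nothing. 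The net valuation is therefore exactly $r'$, independent of the choice of $(\bar X,\bar Y)$. Passing to the GCD, the generator of the $(m-r')$-th Fitting ideal has $(t-1)$-valuation exactly $r'$, so each of the $r$ invariant factors has $(t-1)$-valuation exactly one. Splitting off this simple factor from each invariant factor produces precisely $\dim_\KK\im\partial_{k+1}$ summands $R/(t-1)$ and no higher $(t-1)$-power, which forces $n_{k,1}(1)=\dim_\KK\im\partial_{k+1}$ and $n_{k,j}(1)=0$ for $j\geq 2$; the remaining $\Phi_\s$-factors assemble into $\bigoplus_{\s\in\TT_\Gamma}\bigoplus_{j\geq 1}(R/\Phi_\s(t)^j)^{n_{k,j}(\s)}$.

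The main obstacle is extending the $(t-1)$-valuation computation to positive characteristic: when $p:=\operatorname{char}\KK$ divides some $m_v$, the polynomial $t^{m_v}-1$ acquires $(t-1)$-valuation greater than one in $\KK[t]$ and the cyclotomic polynomial $\Phi_p(t)$ itself degenerates to a power of $t-1$, so $\Phi_1$ and $\Phi_p$ cease to be coprime and the splitting off of the simple $(t-1)$-factor is no longer automatic. In this situation one has $p\in\TT_V\subseteq\TT_\Gamma$, and one must verify that the excess $(t-1)$-multiplicity beyond one per invariant factor is exactly absorbed into the $\Phi_p$-summands of~\eqref{eq:main-hk}, keeping the stated decomposition intact.
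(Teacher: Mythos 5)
Your route is the paper's route: the free part is quoted from Theorem~\ref{teo:freepart}, the torsion is read off the invariant factors of $M^\chi_{k+1}(t)$ via Lemma~\ref{lem:fitting}, the support $\{1\}\cup\TT_\Gamma$ comes from Proposition~\ref{prop:mXY}\ref{prop:mXY1} together with the cyclotomic factorizations of $t^{m}-1$ and $q_{\tilde\ell(e)}(t^{m_e})$, and the $(t-1)$-part is pinned down by showing that every non-zero $r'\times r'$ minor has $(t-1)$-multiplicity exactly $r'$ (the paper states this as ``$(t-1)$ is a factor with multiplicity precisely $r$ of each $r\times r$ non-zero minor''), whence each of the $\dim_\KK\im \partial_{k+1}$ invariant factors carries $t-1$ exactly once. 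In characteristic zero your write-up is complete and coincides with the published argument; your count $v_{(t-1)}(\p_{\bar X})=r'(k+2)$, $v_{(t-1)}(\p_{\bar Y})=r'(k+1)$ is exactly the justification the paper leaves implicit.

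Concerning the obstacle you flag in positive characteristic: the paper does not do more than you do here --- its proof simply asserts the multiplicity claim without further argument --- so you have not missed an idea that is present in the paper. Your caution is in fact warranted, and note that the difficulty is caused not only by $p\mid m_v$ but also by $p\mid \tilde\ell(e)$ with $m_e\neq 0$ (still $\KK$ non-resonant), since then $q_{\tilde\ell(e)}(t^{m_e})$ also vanishes at $t=1$. For instance, for a single edge labeled $4$ with $m_v=m_w=1$ and $\KK=\FF_2$ the equivariant complex gives $H_1(A^\chi_\Gamma;\FF_2)\cong \FF_2[t^{\pm 1}]/\left((t+1)^3\right)$, an indecomposable module, whereas the stated shape (here $\dim_\KK\im\partial_1=1$, $\TT_\Gamma=\{4\}$, and $\Phi_4\equiv (t+1)^2$ in $\FF_2[t]$) would force a nontrivial direct sum; so the hoped-for absorption of the excess $(t-1)$-powers into $\Phi_p$-type summands does not happen literally, and in positive characteristic the statement requires either a restriction (e.g.\ $\mathrm{char}\,\KK$ prime to the relevant $m_v$ and $\tilde\ell(e)m_e$) or a reformulation of how $\Phi_\s$ is interpreted. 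Relative to the paper's own proof, your attempt is the same argument, honestly annotated at its weakest point.
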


\begin{proof}
The free part was given in Theorem~\ref{teo:freepart}. By Proposition~\ref{prop:mXY}\ref{prop:mXY1} the only
possible torsion appears as a root of polynomials of type either $\p_{\bar X}$ or $\q_{\bar X}$, that is, 
as roots of either $t^{m_v}-1$ or $q_{\tilde \ell(e)}(t^{m_e})$.
The union of the first type of roots is given by $\mathbb T_{V_\Gamma}$
whereas the second type of roots is given by $\mathbb T_{E_\Gamma}$.
To end the proof, note that the hypothesis that $\chi$ is $\KK$ non-resonant together with 
Proposition~\ref{prop:mXY}\ref{prop:mXY1} imply that the polynomial $(t-1)$ is a factor with multiplicity 
precisely $r$ of each $r\times r$ non-zero minor $\mathfrak{m}^\chi_{(\bar X,\bar Y)}$. Together with the 
discussion above this implies that the $(t-1)$-part of the torsion module is semisimple. Moreover,
according to  Proposition~\ref{prop:mXY}\ref{prop:mXY3}, the biggest possible such $r$ is 
$\dim_\KK\im \partial_{k+1}$, hence the result follows.
\end{proof}

\section{On the free part of $H_{k+1}(A^{\chi}_{\Gamma};\KK)$ in the resonant case}
\label{sec:resonant}

In this section we will see how to adapt $\Gamma$ to apply Theorem~\ref{teo:freepart} in the $\KK$ resonant case for 
$H_i(A^{\chi}_{\Gamma};\KK)$, $i=1,2$. The case of higher homology groups will be treated in a forthcoming paper.
Let $\chi$ be a character and $\cR(\Gamma,\chi)$ its resonant set. In the following particular cases of edges $e=\{v,w\}$ one has
$$
\partial^\chi(\sigma^\chi_e)=
\begin{cases}
\langle e|w\rangle\p_w\q_e\sigma^\chi_v & \textrm{ if } v\in V_\cR, w\notin V_\cR\\
0 & \textrm{ if either } v,w\in V_\cR \textrm{ or } e\in E_\cR.\\
\end{cases}
$$
Consider the graph $\Gamma_1$ obtained from $\Gamma$ after deleting the (open) edges $e=\{v,w\}\in E$ 
for which either $v,w\in V_\cR$ or $e\in E_\cR$ and the vertices $v\in V_\cR$ whose link intersects 
$V\setminus V_\cR$. There is a morphism between the (flag) chain complexes associated to $\Gamma$ and 
$\Gamma_1$ which is a quasi-isomorphism up to $H_0$. Moreover, $\chi$ induces a character 
$\chi_1:A_{\Gamma_1}\to\ZZ$ which produces another Artin kernel $A^{\chi_1}_{\Gamma_1}$ such that 
$H_1(A^{\chi}_{\Gamma};\KK)\cong H_1(A^{\chi_1}_{\Gamma_1};\KK)$. The character $\chi_1$ might still be 
$\KK$ resonant for $\Gamma_1$, but the vertices in the kernel of $\chi_1$ are isolated in $\Gamma_1$. 
Hence the proof in Theorem~\ref{teo:freepart} can be applied to~$\Gamma_1$ to obtain the following result.

\begin{teo}
\label{teo:h1resonant}
Let $A^\chi_\Gamma$ be the Artin kernel of a general character $0\neq \chi:A_\Gamma\to \ZZ$ of an even Artin-Tits group.
Then the rank of the free part of $H_1(A^\chi_\Gamma;\KK)$ as a $\KK[t^{\pm 1}]$-module is~$\dim_\KK \tilde H_0(\Gamma_1;\KK)$. 

In particular, $H_1(A^\chi_\Gamma;\KK)$ is a torsion module if and only if $\Gamma_1$ is connected.
\end{teo}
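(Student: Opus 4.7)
By Proposition~\ref{prop:isom}, it suffices to compute the free rank of $H_0(C_*^\chi(\Gamma),\partial^\chi)$ as a $\KK[t^{\pm 1}]$-module. My strategy is first to reduce from $\Gamma$ to $\Gamma_1$, showing that this reduction preserves the free rank, and then to split the complex of $\Gamma_1$ into a $\KK$ non-resonant subgraph $\Gamma_1'$---on which Theorem~\ref{teo:freepart} applies directly---together with the now-isolated resonant vertices, which are handled by inspection.

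For the reduction step, let $V_1 := \{v \in V_\cR : \lk(v) \cap (V \setminus V_\cR) \neq \emptyset\}$ denote the resonant vertices deleted in passing to $\Gamma_1$, and let $E^\circ := E \setminus E(\Gamma_1)$ be the deleted edges. I would introduce the subcomplex $D_* \subset C_*^\chi(\Gamma)$ with $D_0 := \bigoplus_{v\in V_1}\KK[t^{\pm 1}]\sigma^\chi_v$, $D_1 := \bigoplus_{e\in E^\circ}\KK[t^{\pm 1}]\sigma^\chi_e$, and $D_k = 0$ otherwise. A short case analysis using~\eqref{eq:dk} verifies that $D_*$ is closed under $\partial^\chi$: edges with both endpoints in $V_\cR$ and edges in $E_\cR$ have vanishing $\partial^\chi_1$, while for $e = \{v,w\} \in E^\circ$ with $v\in V_1$ and $w\notin V_\cR$ one has $\partial^\chi_1 \sigma^\chi_e = \pm(t^{m_w}-1)q_{\tilde\ell(e)}(t^{m_e})\sigma^\chi_v \in D_0$. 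The quotient complex $C_*^\chi(\Gamma)/D_*$ then coincides with $C_*^{\chi_1}(\Gamma_1)$ in degrees $-1, 0, 1$, and $H_0(D_*)$ is torsion because for every $v\in V_1$ the generator $\sigma^\chi_v$ is annihilated modulo $\im(\partial^\chi_1|_{D_1})$ by the nonzero polynomial arising from any edge at $v$ with a non-resonant endpoint (such an edge exists by definition of $V_1$). The associated long exact sequence yields a surjection $H_1(A^\chi_\Gamma;\KK) \twoheadrightarrow H_1(A^{\chi_1}_{\Gamma_1};\KK)$ with torsion kernel, so these modules share the same free rank.

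For the analysis of $\Gamma_1$, write $V_0 := V_\cR \setminus V_1$ for the resonant vertices without non-resonant neighbors; by construction these are exactly the isolated vertices of $\Gamma_1$ lying in $V_\cR$. Setting $\Gamma_1' := \Gamma_1 \setminus V_0$, the induced restriction $\chi_1' := \chi_1|_{A_{\Gamma_1'}}$ is $\KK$ non-resonant. Since isolated vertices participate in no higher simplex and satisfy $\partial^\chi_0 \sigma^\chi_v = (t^{m_v}-1)\sigma^\chi_\emptyset = 0$, the chain complex decomposes as a direct sum $C_*^{\chi_1}(\Gamma_1) = C_*^{\chi_1'}(\Gamma_1') \oplus \bigoplus_{v\in V_0}\KK[t^{\pm 1}]\sigma^\chi_v$ with the second summand concentrated in degree~$0$ with zero differential. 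Consequently $H_1(A^{\chi_1}_{\Gamma_1};\KK) \cong H_1(A^{\chi_1'}_{\Gamma_1'};\KK) \oplus \KK[t^{\pm 1}]^{|V_0|}$, and Theorem~\ref{teo:freepart} applied to $(\Gamma_1',\chi_1')$ gives the free rank of the first summand as $\dim_\KK \tilde H_0(\cF^f(\Gamma_1');\KK) = \dim_\KK \tilde H_0(\Gamma_1';\KK)$. Adding the $|V_0|$ summands from the isolated vertices yields total free rank $\dim_\KK \tilde H_0(\Gamma_1';\KK) + |V_0| = \dim_\KK \tilde H_0(\Gamma_1;\KK)$, since each vertex in $V_0$ forms its own connected component of $\Gamma_1$.

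The principal technical obstacle is the reduction step: verifying that $D_*$ is a subcomplex and that $H_0(D_*)$ is torsion. Both rely on the key observations that $t^{m_v} - 1 \not\equiv 0$ exactly when $v\notin V_\cR$ and $q_{\tilde\ell(e)}(t^{m_e}) \not\equiv 0$ exactly when $e\notin E_\cR$, which govern which terms of~\eqref{eq:dk} survive. Finally, the ``in particular'' claim is immediate: $H_1(A^\chi_\Gamma;\KK)$ is a torsion $\KK[t^{\pm 1}]$-module if and only if its free rank vanishes, equivalently $\dim_\KK \tilde H_0(\Gamma_1;\KK) = 0$, which holds precisely when $\Gamma_1$ is connected.
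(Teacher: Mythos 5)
Your proposal is correct and takes essentially the same route as the paper: pass from $\Gamma$ to $\Gamma_1$, observe that the remaining resonant vertices are isolated, and invoke (the proof of) Theorem~\ref{teo:freepart}, identifying $\tilde H_0$ of the resulting complex with $\tilde H_0(\Gamma_1;\KK)$. Your explicit subcomplex $D_*$ and the long exact sequence simply make precise the paper's brief assertion that the reduction to $\Gamma_1$ does not affect the degree-$0$ homology of the equivariant complex, and the weaker conclusion you extract (a surjection with torsion kernel, hence equal free rank) is exactly what the theorem requires.
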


The previous theorem, when applied to right-angled Artin groups, recovers 
well-known results~\cite{Bux-Gonzalez-Bestvina,MMVW}, 
characterizing the group $A^\chi_\Gamma$ to be finitely generated 
(which implies $H_1(A^\chi_\Gamma;\KK)$ is torsion) if and only if $\Gamma\setminus V_\cR$ is connected and 
$\chi$ is dominant, that is, for any $v\in V_\cR$ there exists a $w\in V_\Gamma\setminus V_\cR$ such that 
$e=\{v,w\}\in E_\Gamma$, which is equivalent to asking $\Gamma_1$ to be connected.

Analogously, for the following particular cases of sets $X=\{u,v,w\}\in \cS^f$ note that

$$
\partial^\chi(\sigma^\chi_X)=
\begin{cases}
\langle X_u|X\rangle\p_u\q_e\sigma^\chi_e & \textrm{ if } v,w\in V_\cR, u\notin V_\cR\\
\langle X_u|X\rangle\p_u\q_e\sigma^\chi_e & \textrm{ if } e=\{v,w\}\in E_\cR, u\notin V_\cR\\
0 & \textrm{ otherwise}.\\
\end{cases}
$$
The simplicial subcomplex $\cF_2$ obtained from the 2-skeleton of the finite type flag complex $\cF^f(\Gamma)$ 
after removing the 2-cells $X=\{u,v,w\}\in \cF^f(\Gamma)$ such that either $\{u,v\}\in E_\cR$ and $w\in V_\cR$ or 
$u,v,w\in V_\cR$, removing the 1-cells $e=\{v,w\}$ in $E_\cR$ or $v,w\in V_\cR$ whose link intersects 
$V_\Gamma\setminus V_\cR$ and then identifying the ends of the remaining 1-cells $e=\{u,v\}$ such that 
$u,v\in V_\cR$ or $e\in E_\cR$. As before one obtains a morphism of complexes which induces 
$H_2(A^{\chi}_{\Gamma};\KK)\cong H_1(C^\chi_*(\cF_2);\KK)$. In that case, $\cF_2$ might not be the finite type 
flag complex of an Artin-Tits group. However, the proof of Theorem~\ref{teo:freepart} 
still applies to obtain the following result.

\begin{teo}
\label{teo:h2resonant}
Let $A^\chi_\Gamma$ be the Artin kernel of a general character $0\neq \chi:A_\Gamma\to \ZZ$ of an even Artin-Tits group.
Then the rank of the free part of $H_2(A^\chi_\Gamma;\KK)$ as a $\KK[t^{\pm 1}]$-module is $\dim_\KK \tilde H_1(\cF_2;\KK)$. 

In particular, $H_2(A^\chi_\Gamma;\KK)$ is a torsion module if and only if $\cF_2$ is 1-acyclic.
\end{teo}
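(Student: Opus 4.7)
The plan is to adapt the strategy of Theorem~\ref{teo:h1resonant} one dimension higher. First I would carry out a case-by-case analysis of $\partial^\chi_2$ applied to 2-simplices $X=\{u,v,w\}$ and of $\partial^\chi_1$ applied to 1-simplices $e=\{v,w\}$ whose constituents lie in the resonant set $\cR(\Gamma,\chi,\KK)$. The key observation is that every summand in formula~\eqref{eq:dk} for $\partial^\chi_k\sigma^\chi_X$ carries a factor $(t_s-1)$ for the deleted vertex $s$ together with factors $q_{\tilde\ell(\{s,t\})}(t_st_t)$ for each edge $\{s,t\}$ incident to $s$ in $X$. Whenever $s\in V_\cR$ the former factor vanishes, and whenever one of these edges lies in $E_\cR$ its $q$-factor vanishes. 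Consequently 2-cells entirely contained in $V_\cR$, or with an edge in $E_\cR$ opposite a resonant vertex, yield zero boundary, and similarly for 1-cells in $E_\cR$ or with both endpoints in $V_\cR$. The resulting list of trivially bounded cells matches exactly the cells removed or identified when forming $\cF_2$.

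Next I would assemble a $\KK[t^{\pm 1}]$-chain complex $C_*^\chi(\cF_2)$ built from the cells of $\cF_2$, with boundary inherited from~\eqref{eq:dk} on the surviving cells, and construct a natural chain map from the truncation of $(C_*^\chi(\Gamma),\partial^\chi_*)$ in degrees $\leq 2$ to $C_*^\chi(\cF_2)$. The cells removed in forming $\cF_2$ are precisely the boundary-trivial generators detected in the previous step, while the identifications of resonant 1-cells correspond to fusing generators whose images under $\partial^\chi_1$ are already proportional or zero. A direct comparison of differentials should then show that this projection is a quasi-isomorphism in degree one, giving $H_2(A^{\chi}_{\Gamma};\KK)=H_1(C_*^\chi(\Gamma),\partial^\chi)\cong H_1(C_*^\chi(\cF_2),\partial^\chi)$.

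I would then run the argument of Theorem~\ref{teo:freepart} on $\cF_2$. On the surviving cells all factors $t_v-1$ and $q_{\tilde\ell(e)}(t_e)$ appearing in $\partial^\chi$ are non-zero in $\KK[t^{\pm 1}]$, so after rescaling each basis element of $C_k^\chi(\cF_2)\otimes_{\KK[t^{\pm 1}]}\KK(t)$ by $\p_X\q_X$ as in~\eqref{eq:delta}, the boundary becomes the $\KK(t)$-linear extension of the incidence boundary of $\cF_2$. The rank of the free part of $H_1(C_*^\chi(\cF_2),\partial^\chi)$ over $\KK[t^{\pm 1}]$ therefore equals $\dim_{\KK(t)}(H_1(\cF_2;\KK)\otimes_\KK\KK(t))=\dim_\KK \tilde H_1(\cF_2;\KK)$. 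The ``in particular'' statement follows at once, since the module is torsion exactly when its free part vanishes, i.e., when $\tilde H_1(\cF_2;\KK)=0$.

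The hard part will be verifying rigorously that the removals and identifications defining $\cF_2$ yield the correct chain-complex quotient. The space $\cF_2$ is no longer the finite type flag complex of any Artin-Tits group (the identifications can create loops or multi-edges and destroy the simplicial structure), so Theorem~\ref{teo:freepart} cannot simply be invoked as a black box. It will be essential to describe $C_*^\chi(\cF_2)$ explicitly cell-by-cell, to check that the identifications on resonant 1-cells are compatible with the surviving boundary maps, including signs and the $\langle Y|X\rangle$ incidences, and to confirm via a direct computation that the projection from $(C_*^\chi(\Gamma),\partial^\chi)$ to this new complex is indeed a chain map and a quasi-isomorphism in the relevant degree.
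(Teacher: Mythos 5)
Your proposal follows essentially the same route as the paper: the same case analysis of $\partial^\chi$ on $2$-cells and $1$-cells meeting the resonance set, the same construction of $\cF_2$ (removing the boundary-trivial cells and identifying the ends of the surviving resonant $1$-cells) together with a morphism of complexes giving $H_2(A^{\chi}_{\Gamma};\KK)\cong H_1(C^\chi_*(\cF_2);\KK)$, and the same reuse of the proof of Theorem~\ref{teo:freepart}, which the paper likewise notes still applies even though $\cF_2$ need not be the finite type flag complex of an Artin-Tits group. The verification you flag as the hard part (that the projection is a chain map and a quasi-isomorphism in degree one) is precisely what the paper leaves implicit, so your outline matches the published argument.
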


\begin{exam}
\label{exam:dihedral-rank}
{\rm To illustrate the different behavior of the homology groups according to whether the character is or not $\KK$ resonant, 
we can consider just the example of a dihedral Artin-Tits group.
Let $\Gamma$ be the complete graph with two vertices as in Example~\ref{exam:dihedral} with label 4. 
Put $V_\Gamma=\{u,v\}$, $E_\Gamma=\{e=\{u,v\}\}$, and let $\chi:A_\Gamma\to \ZZ$ be the character defined by 
$\chi(g_u)=1$, $\chi(g_v)=-1$.
Then $\chi$ is $\KK$ non-resonant if and only if $\textrm{char}(\KK)\neq 2$. 
Note that $A_\Gamma=\langle g_u,g_v \mid (g_ug_v)^2=(g_vg_u)^2\rangle$
and $A^\chi_\Gamma=\langle w_n=g_u^{n+1}g_vg_u^{-n} \mid w_n^2=w_0^2, n\in \ZZ\rangle$ and hence
$$H_1(A^\chi_\Gamma;\KK)=
\begin{cases}
\KK[t^{\pm 1}] & \textrm{if } \textrm{char}(\KK)=2, \\
\frac{\KK[t^{\pm 1}]}{(t-1)} & \text{otherwise.}
\end{cases}
$$
Theorems~\ref{teo:freepart} and \ref{teo:torsionhk} hold for $\textrm{char}(\KK)\neq 2$, since 
$\tilde H_k(\cF^f(\Gamma);\KK)=0$ ($\Gamma$ is contractible) and $\dim_\KK\im \partial_{1}=1$ 
($\partial_{1}$ is injective and $C_1^f(\Gamma)=c_e\KK$).

Note that if $\textrm{char}(\KK)=2$, then Theorems~\ref{teo:freepart} and~\ref{teo:torsionhk} do not hold, but one can apply 
the discussion in \S\ref{sec:resonant}. In this case, the free part of $H_1(A^\chi_\Gamma;\KK)$ comes from the fact that 
$\tilde{\Gamma}_1$ is not connected, since it results from $\Gamma$ after removing the edge.

In addition, the non-finiteness presentation of $A^\chi_\Gamma$ is a consequence of the non-triviality of the free part of 
$H_1(A^\chi_\Gamma;\FF_2)$.

Also observe that the same argument applies to all dihedral Artin-Tits groups with edge labeled by 
$2\ell$ for the character $\chi$ defined as above, i.e., that 
$H_1(A^\chi_\Gamma;\KK)$ is not a torsion module whenever $\textrm{char}(\KK)|\ell$, thus showing that $A^\chi_\Gamma$ 
does not admit a finite presentation.}
\end{exam}

\begin{exam}{\rm
Consider the Artin-Tits group associated with the graph $\Gamma$ shown in Figure~\ref{fig:ejemplocuadradoresonante}
and the character $\chi:A_\Gamma\to \ZZ$ defined as $\chi(v_1)=\chi(v_2)=\chi(v_3)=1$, and $\chi(v_0)=-1$.

\begin{figure}[ht]
\begin{tikzpicture}
\tikzstyle{pto} = [circle, minimum width=8pt, fill, inner sep=0pt]
\node[pto] (n1) at (3,3) {};
\node[pto] (n2) at (0,0) {};
\node[pto] (n3) at (3,0) {};
\node[pto] (n4) at (0,3) {};
\node (n5) at (1.4,1.7) {$4$};
\draw (1.5,0) node[below] {$2$} ;
\draw (1.5,3) node[above] {$2$} ;
\draw (3,1.5) node[right] {$2$} ;
\draw (0,1.5) node[left] {$2$} ;
\draw (n1) node[above right] {$v_2$} ;
\draw (n2) node[below left] {$v_0$} ;
\draw (n3) node[below right] {$v_3$} ;
\draw (n4) node[above left] {$v_1$} ;
\draw (n2) -- (n3) -- (n1) -- (n4) -- (n2) -- (n1);
\end{tikzpicture}
\caption{}
\label{fig:ejemplocuadradoresonante}
\end{figure}

Note that $\chi$ is $\FF_2$ resonant with resonance set  $\cR(\Gamma,\chi,\FF_2)=E_\cR=\{\sigma_{02}\}$.
Its equivariant complex can be described as
$$
\array{ccccccccccc}
0 & \to & C^\chi_3(\Gamma) & \to & C^\chi_2(\Gamma) & \to & C^\chi_1(\Gamma) & \to & C^\chi_0(\Gamma) & \to & 0\\
&& \sigma_{012} & \mapsto & (t+1)\sigma_{02} && \sigma_{0}&\mapsto& (t^{-1}+1)\sigma_{\emptyset}\\
&& \sigma_{023} & \mapsto & (t+1)\sigma_{02} && \sigma_{i}, i=1,2,3& \mapsto& (t+1)\sigma_{\emptyset}\\
&&&& \sigma_{02} & \mapsto & 0 &&\\
&&&& \sigma_{12} & \mapsto & (t+1)(\sigma_{1}+\sigma_{2}) &&\\
&&&& \sigma_{23} & \mapsto & (t+1)(\sigma_{2}+\sigma_{3}) &&\\
&&&& \sigma_{01} & \mapsto & (t+1)(\sigma_{0}+t^{-1}\sigma_{1}) &&\\
&&&& \sigma_{03} & \mapsto & (t+1)(\sigma_{0}+t^{-1}\sigma_{3}) &&\\
\endarray
$$
Hence 
$$
\begin{aligned}
H_1(A^\chi_\Gamma;\FF_2)&=
\left( \frac{\FF_2[t^{\pm 1}]}{(t+1)}(\sigma_{1}+\sigma_{2})\right)\oplus 
\left(\frac{\FF_2[t^{\pm 1}]}{(t+1)}(\sigma_{2}+\sigma_{3})\right)\oplus 
\left(\frac{\FF_2[t^{\pm 1}]}{(t+1)}(\sigma_{2}+t\sigma_{0})\right)\\
H_2(A^\chi_\Gamma;\FF_2)&=\left( \frac{\FF_2[t^{\pm 1}]}{(t+1)}\sigma_{02}\right)\oplus 
\FF_2[t^{\pm 1}]((\sigma_{12}+\sigma_{23})+t(\sigma_{01}+\sigma_{03})).
\end{aligned}
$$
Note that $\dim_{\FF_2} H_1(A^\chi_\Gamma;\FF_2)<\infty$ but $\dim_{\FF_2} H_2(A^\chi_\Gamma;\FF_2)=\infty$ 
is consistent with the discussion above since $\cF_2=\Gamma_1=\Gamma\setminus \{\sigma_{02}\}$ 
is connected but not simply connected.

For the same graph but a different character $\chi':A_\Gamma\to \ZZ$ defined as $\chi'(v_1)=\chi'(v_3)=0$, $\chi'(v_2)=1$, 
and $\chi'(v_0)=-1$ note that $\Gamma_1=\Gamma\setminus\{\sigma_{02}\}$ as before, however $\sigma_{02}$ remains in
the construction of $\cF_2$ as described above. In fact,
$\cF_2=\mathbb{S}^1\vee\mathbb{S}^1\vee\mathbb{S}^1$, which implies,
$$
\begin{aligned}
H_1(A^{\chi'}_\Gamma;\FF_2)&=
\left( \frac{\FF_2[t^{\pm 1}]}{(t+1)}\sigma_{1}\right)\oplus 
\left(\frac{\FF_2[t^{\pm 1}]}{(t+1)}\sigma_{3}\right)\oplus 
\left(\frac{\FF_2[t^{\pm 1}]}{(t+1)}(\sigma_{2}+t\sigma_{0})\right)\\
H_2(A^{\chi'}_\Gamma;\FF_2)&=\FF_2[t^{\pm 1}]\sigma_{02}\oplus 
\FF_2[t^{\pm 1}](\sigma_{12}+t\sigma_{01})\oplus 
\FF_2[t^{\pm 1}](\sigma_{23}+t\sigma_{03}).
\end{aligned}
$$}
\end{exam}

\section{Torsion in $H_{k+1}(A^\chi_\Gamma;\KK)$ for $\textrm{char}(\KK)=0$ and $\chi$ non-resonant}
\label{sec:main}
The purpose of this section is to give more specific formulas for the invariants $n_{k,j}(\s)$ as introduced in
Theorem~\ref{teo:torsionhk} in the particular case $\textrm{char}(\KK)=0$. Note that in this case, the notion 
of non-resonant and $\KK$ non-resonant are equivalent.

We will assume in this section that $\Gamma$ is connected. Otherwise $A_\Gamma=A_{\Gamma_1} * A_{\Gamma_2}$ 
is a free product of groups and one can check that the corresponding equivariant complexes fit in a short exact sequence 
$$0\to C^{\chi_1}_*(\Gamma_1)\to \bar C^{\chi}_*(\Gamma)\to C^{\chi_2}_*(\Gamma_2)\to 0$$
where $\chi_i$ are the corresponding restrictions of $\chi$ to $\Gamma_i$ and $\bar C^{\chi}_*(\Gamma)$ is a variation of 
$C^{\chi}_*(\Gamma)$ where we replace $C^{\chi}_{-1}(\Gamma)$ by 
$\bar C^{\chi}_{-1}(\Gamma)=C^{\chi_1}_{-1}(\Gamma_1)\oplus C^{\chi_2}_{-1}(\Gamma_2)$. This implies that as abelian groups 
$$H_{k+1}(A^\chi_\Gamma;\KK)=H_{k+1}(A^{\chi_1}_{\Gamma_1};\KK)\oplus H_{k+1}(A^{\chi_2}_{\Gamma_2};\KK)$$
for $k>0$. However, as a word of caution, their submodule structure depends on the fact that $\chi_i$ is a restriction of $\chi$,
for instance, in case $\chi_i$ is not an epimorphism. In the case of $k=0$, by Lemma~\ref{lem:fitting} we get the same 
decomposition for the $\KK[t^{\pm1}]$-torsion submodules.

First, a discussion for $H_{1}(A^\chi_\Gamma;\KK)$ is included based on the structure of spanning trees of $\Gamma$ 
following the ideas in~\cite{ACM1}. A second discussion for the general $H_{k+1}(A^\chi_\Gamma;\KK)$ requires 
the introduction of the multiplicity spectral sequence.

\subsection{The $H_{1}(A^\chi_\Gamma;\KK)$ case}
In order to calculate $n_{0,j}(\s)$ we will obtain the invariants of the matrix $M_1^\chi(t)$ which 
defines the boundary map of the $\partial^\chi$-complex with respect to the natural basis, as described in the 
previous section.

Note that $\dim_\KK\im \partial_2=|V_\Gamma|-1$ and hence maximal 1-acyclic pairs $(\bar X,\bar Y)$ are given by 
$\bar X$ the set of edges of a spanning tree $T$ and $\bar Y^c$ the choice of a vertex $v_1\in V_{T}=V_\Gamma$.
(observe that the number of vertices in any tree is exactly one plus the number of edges) 
We will call $(T,v_1)$ a \emph{rooted spanning tree}. 
All 1-acyclic pairs $(\bar X,\bar Y)$ of size $r=|V_\Gamma|-s$ can be obtained as follows. Consider 
$F_s=T_1\cup...\cup T_s$ an $s$-forest, that is, a disjoint union of $s$ trees. By convention, a tree with zero edges is 
just a vertex. An $s$-forest $F_s$ is called a spanning $s$-forest if the union of its vertices is $V_\Gamma$, that is, 
$V_\Gamma=\cup V_{T_i}$. A pair $(F_s,\bar v)$ for $\bar v=(v_1,...,v_s)$, where $v_i\in V_{T_i}$ is called a 
\emph{rooted spanning $s$-forest} of~$\Gamma$. The following result is immediate.

\begin{lem}
\label{lem:rooted}
Any 1-acyclic pair $(\bar X,\bar Y)$ of order $r=|V_\Gamma|-s$ is given as $\bar X=E_{F_s}$ and $\bar Y^c=\bar v$ 
for a rooted spanning $s$-forest $F_s$ of~$\Gamma$. Moreover, if $(F'_{s+1},\bar v'\})$ is obtained from $(F_{s},\bar v)$
after eliminating an edge, $(\bar X,\bar Y)$ (resp. $(\bar X',\bar Y')$) denotes their corresponding 1-acyclic pairs, and 
$m$ (resp. $m'$) denotes the multiplicity of a root $\zeta_\s$ of $\Phi_\s(t)$ in the polynomial 
$\mathfrak m^\chi_{(\bar X,\bar Y)}$ (resp. $\mathfrak m^\chi_{(\bar X',\bar Y')}$), then $m\leq m'+2$.
\end{lem}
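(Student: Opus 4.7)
The plan is to handle the two claims in turn. For the characterization of 1-acyclic pairs, I first unpack the definition: $N(\bar X)$ is the 1-dimensional subcomplex of $\cF^f(\Gamma)$ consisting of all vertices of $\Gamma$ together with the edges in $\bar X$, while $N(\bar Y^c)$ is the discrete subspace $\bar Y^c \subset V_\Gamma$. The long exact sequence of the pair $(N(\bar X),N(\bar Y^c))$ shows that acyclicity is equivalent to (i) $H_1(N(\bar X);\KK)=0$, i.e.\ $\bar X$ contains no cycle of $\Gamma$ and is therefore a forest, together with (ii) the inclusion inducing a bijection $\pi_0(N(\bar Y^c))\to \pi_0(N(\bar X))$, i.e.\ each connected component of $N(\bar X)$ contains exactly one vertex of $\bar Y^c$. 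Since a forest on $|V_\Gamma|$ vertices with $r=|V_\Gamma|-s$ edges has exactly $s$ components, this forces $|\bar Y^c|=s$ and exhibits $(\bar X,\bar Y)$ as coming from a rooted spanning $s$-forest $(F_s,\bar v)$ with $\bar v=\bar Y^c$.

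For the multiplicity bound I would compare the two minors via Proposition~\ref{prop:mXY}\ref{prop:mXY1}. Since rooted spanning forests are acyclic pairs, the ordinary minors $\mathfrak m_{(\bar X,\bar Y)}$ and $\mathfrak m_{(\bar X',\bar Y')}$ are determinants of reduced signed incidence matrices of forests and are therefore unimodular; both equal $\pm 1$. Let $e=\{u,u'\}$ be the eliminated edge and $v_*$ the new root that appears in $\bar v'\setminus \bar v$. Careful bookkeeping of the factors in $\p_{\bar X},\q_{\bar X},\p_{\bar Y},\q_{\bar Y}$ under the two operations ``delete the edge $e$'' and ``add the root $v_*$'' yields
$$
\frac{\mathfrak m^\chi_{(\bar X,\bar Y)}}{\mathfrak m^\chi_{(\bar X',\bar Y')}} \;=\; \pm\,\frac{(t_u-1)(t_{u'}-1)\,q_{\tilde\ell(e)}(t_e)}{t_{v_*}-1},
$$
so that $m-m'=a+b+c-d$, where $a,b,c,d$ denote the $\zeta_\s$-multiplicities of $t_u-1$, $t_{u'}-1$, $q_{\tilde\ell(e)}(t_e)$, and $t_{v_*}-1$ respectively.

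The proof then concludes with a short arithmetic case analysis. Each of $a,b,d$ lies in $\{0,1\}$ according to whether $\s$ divides $m_u,m_{u'},m_{v_*}$, and $c\in\{0,1\}$ is $1$ precisely when $\s\mid \tilde\ell(e)m_e$ but $\s\nmid m_e$. The key incompatibility is that if $a=b=1$ then $\s$ divides $m_u+m_{u'}=m_e$, which forces $c=0$. Hence $a+b+c\leq 2$ in every case, and since $d\geq 0$ we conclude $m\leq m'+2$. The step I expect to require the most care is the bookkeeping of the ratio of minors — making sure all the extraneous factors from $\p$ and $\q$ genuinely cancel once the unimodular base determinants are factored out — but once that identity is in place, the arithmetic bound $a+b+c\leq 2$ is immediate.
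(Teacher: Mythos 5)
Your proof is correct and follows essentially the same route as the paper: the paper also identifies acyclic pairs with rooted spanning $s$-forests (stated as immediate) and compares the two minors via Proposition~\ref{prop:mXY}\ref{prop:mXY1}, obtaining the ratio $\p_e\q_e/\p_{v_{s+1}}$ exactly as in your identity. Your closing case analysis showing $a+b+c\leq 2$ is precisely the content the paper compresses into the remark that $\p_e,\q_e$ have simple roots (and records separately later as Lemma~\ref{lem:multiplicity}), so if anything you have made the final step more explicit.
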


\begin{proof}
The first part is immediate. For the moreover part, let us denote by $\bar v'=(v_1,...,v_s,v_{s+1})$, 
where $\bar v=(v_1,...,v_s)$ and by $e\in E_{T_s}$ the edge removed to obtain the $(s+1)$-forest $F'_{s+1}$. 
Note that from Proposition~\ref{prop:mXY}\ref{prop:mXY1} one obtains
$$\mathfrak m^\chi_{(\bar X,\bar Y)}=
\mathfrak m^\chi_{(\bar X',\bar Y')} \frac{\p_e\q_e}{\p_{v_{s+1}}}.$$
Since the polynomials $\p_e$, $\q_e$ only have simple roots, the claim follows.
\end{proof}

We can use this to obtain a more detailed description of $H_1(A^\chi_\Gamma;\KK)$ as follows.

\begin{teo}
\label{thm:ms} The invariants $n_{0,j}(\s)$ of Theorem \ref{teo:torsionhk} vanish for $j> 2$ so 
$$
H_1(A^\chi_\Gamma;\KK)=\KK[t^{\pm 1}]^{r}
\bigoplus_{\s\in \mathbb T_\Gamma} 
\left[
\left( \frac{\KK[t^{\pm 1}]}{\Phi_\s(t)}\right)^{n_{0,1}(\s)}
\oplus
\left( \frac{\KK[t^{\pm 1}]}{\Phi_\s(t)^{2}}\right)^{n_{0,2}(\s)}
\right].
$$
\end{teo}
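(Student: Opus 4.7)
The plan is to read off the invariants of $M_1^\chi(t)$ directly from its Fitting ideals and then to run the edge-deletion relation of Lemma~\ref{lem:rooted} in reverse. By Lemma~\ref{lem:fitting} together with the Fitting-ideal discussion in Section~\ref{sec:complex}, the torsion of $H_1(A^\chi_\Gamma;\KK)$ is described by the invariant factors $g_s=f_s/f_{s+1}$, where $f_s$ generates the $s$-th Fitting ideal of $\coker\partial^\chi_1$. The first assertion of Lemma~\ref{lem:rooted} identifies the nonzero $(|V_\Gamma|-s)\times(|V_\Gamma|-s)$ minors of $M_1^\chi(t)$ with those attached to rooted spanning $s$-forests of $\Gamma$, so the multiplicity of a root $\zeta_\s$ of $\Phi_\s(t)$ in $f_s$ equals the minimum of the multiplicities of $\zeta_\s$ in $\mathfrak{m}^\chi_{(F_s,\bar v)}$ as $(F_s,\bar v)$ ranges over all rooted spanning $s$-forests. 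Proving the theorem therefore reduces to showing that this minimum can drop by at most $2$ when passing from $s$-forests to $(s+1)$-forests.

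For this step I would fix $s\geq 1$ and $\s\in\TT_\Gamma$ and choose a rooted spanning $(s+1)$-forest $(F'_{s+1},\bar v')$ that realizes the minimum multiplicity $m'$ of $\zeta_\s$ in the corresponding minor. Because $\Gamma$ is connected (the standing hypothesis of Section~\ref{sec:main}) and $s+1\geq 2$, some edge $e\in E_\Gamma$ must join two distinct components of $F'_{s+1}$. Adding $e$ and discarding the associated root from $\bar v'$ yields a rooted spanning $s$-forest $(F_s,\bar v)$ from which $(F'_{s+1},\bar v')$ is recovered by eliminating $e$, so the second assertion of Lemma~\ref{lem:rooted} applies and gives $m\leq m'+2$ for the multiplicity $m$ of $\zeta_\s$ in $\mathfrak{m}^\chi_{(F_s,\bar v)}$. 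Since the multiplicity of $\zeta_\s$ in $f_s$ is bounded above by $m$, we conclude that it exceeds the multiplicity of $\zeta_\s$ in $f_{s+1}$ by at most $2$, hence that the multiplicity of $\Phi_\s$ in $g_s$ is at most $2$ for every $s$. This forces $n_{0,j}(\s)=0$ for every $j>2$ and every $\s\in\TT_\Gamma$, and the stated decomposition is then immediate from Theorem~\ref{teo:torsionhk}.

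The main obstacle is not the inequality itself, which is already packaged inside Lemma~\ref{lem:rooted}, but the bookkeeping needed to reverse the edge-deletion process: one must confirm that every rooted spanning $(s+1)$-forest genuinely arises by removing an edge from some rooted spanning $s$-forest of $\Gamma$. This is exactly where connectedness of $\Gamma$ is used, and it is the reason the reduction to the connected case carried out at the beginning of Section~\ref{sec:main} is essential. No further analysis of the polynomials $\p_X$ or $\q_X$ is required, and the argument is uniform in $\s$; in particular, it does not distinguish between the contributions of $\TT_{V_\Gamma}$ and $\TT_{E_\Gamma}$.
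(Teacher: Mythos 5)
Your argument is correct and follows essentially the same route as the paper: identify the Fitting ideals of $M_1^\chi(t)$ with the gcd of the minors attached to rooted spanning $s$-forests (via Proposition~\ref{prop:mXY} and Lemma~\ref{lem:rooted}), then bound the jump in $\Phi_\s$-multiplicity between consecutive $f_s$ by $2$ using the \emph{moreover} part of Lemma~\ref{lem:rooted}. The only difference is that you make explicit the step the paper leaves implicit --- that, by connectedness of $\Gamma$, every rooted spanning $(s+1)$-forest arises from some rooted spanning $s$-forest by deleting an edge --- which is a faithful unpacking of the paper's closing observation rather than a new approach.
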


\begin{proof}
Define for each $1\leq s\leq |V_\Gamma|$, 
$$f_s=\gcd \left( \frac{\p_{F_s}\q_{F_s}\p_{\bar v}}{\p_{V}}: (F_s,\bar v) \textrm{ is a rooted spanning } s\textrm{-forest}\right)$$
where by $\p_{F_s}$, $\q_{F_s}$ we denote $\p_{\bar X}$, $\q_{\bar X}$ where $\bar X$ is the set of edges of the forest $F_s$.

By Proposition~\ref{prop:mXY}\ref{prop:mXY2} and Lemma~\ref{lem:rooted} the ideals $I_s=(f_s(t))$ 
are the Fitting ideals rank $s$. Recall that the invariant factors of 
$H_1(A^\chi_\Gamma;\KK)$ are obtained as $f_s/f_{s+1}$.
The result follows by observing that Lemma~\ref{lem:rooted} implies that the difference between the multiplicity of a root $\zeta_d$ of $\Phi_\s(t)$ in the polynomials $f_s$ ans $f_{s+1}$ is at most 2.
\end{proof}

The rest of the section will be devoted to calculating the invariants $n_{k,j}(\s)$ in terms of the graph 
$\Gamma$ and a non-resonant epimorphism~$\chi$.

\subsection{A weight map}
For a polynomial $f\in\KK[t^{\pm 1}]$, let $\mult_\s(f)$ denote the biggest integer $m$ such that $\Phi_\s(t)^m\mid f$, 
or equivalently the multiplicity of a primitive $d$-th root of unity $\zeta_d$ as a root of $f$.
From Theorem~\ref{thm:ms} we see that the invariants $n_{0,1}(\s)$ and $n_{0,2}(\s)$ can be computed by computing 
$\mult_\s(f_s)$ for each $1\leq s\leq |V_\Gamma|$. For example, in the case of $f_1$ this multiplicity is
\begin{equation}
\label{eq:multiplicity}
\mult_\s(f_s)=\min \left\{\mult_\s\left(\frac{\p_v(t)\p_T(t)\q_T(t)}{\p_V(t)}\right) \mid 
(T,v) \text{ is a rooted spanning tree of } \Gamma\right\},
\end{equation}

Assume $\s\in \mathbb T_\Gamma$ (in particular $d\neq 1$), then 
$$
\array{rcl}
\mult_\s(\p_v)\geq 1 & \iff & d\mid m_v\\
\mult_\s(\q_e)\geq 1 & \iff & \begin{cases}
d\mid \tilde\ell_em_e \\
d\nmid m_e
\end{cases}
\endarray
$$

The following result describes the multiplicity $\mult_\s(\p_e\q_e)$.

\begin{lem}
\label{lem:multiplicity}
Under the conditions above, $\mult_\s(\p_e\q_e)\leq 2$.
\end{lem}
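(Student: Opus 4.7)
The plan is to bound $\mult_\s(\p_e)$ and $\mult_\s(\q_e)$ separately and then exploit the fact that the cases in which each attains its maximum are mutually exclusive.

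First, since $\chi$ is non-resonant, $\p_e = (t^{m_v}-1)(t^{m_w}-1)$. Each polynomial of the form $t^n-1$ has only simple roots (its roots are the $n$-th roots of unity, all distinct), so the multiplicity of $\Phi_\s$ in $t^{m_v}-1$ is $1$ if $\s\mid m_v$ and $0$ otherwise, and similarly for $m_w$. Hence $\mult_\s(\p_e)\leq 2$, with equality precisely when $\s$ divides both $m_v$ and $m_w$.

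Next I would analyze $\q_e = q_{\tilde\ell(e)}(t^{m_e}) = \prod_{\zeta^{\tilde\ell(e)}=1,\ \zeta\neq 1}(t^{m_e}-\zeta)$. If $m_e=0$, then $\q_e = q_{\tilde\ell(e)}(1) = \tilde\ell(e)$ is a nonzero constant in $\KK$ (here I use $\textrm{char}(\KK)=0$), so $\mult_\s(\q_e)=0$. If $m_e\neq 0$, each factor $t^{m_e}-\zeta$ has only simple roots, and a primitive $\s$-th root of unity $\zeta_\s$ can satisfy $t^{m_e}=\zeta$ for at most one choice of $\zeta$, namely $\zeta=\zeta_\s^{m_e}$; this $\zeta$ appears in the product only if it is a nontrivial $\tilde\ell(e)$-th root of unity, which forces $\s\nmid m_e$. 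Consequently $\mult_\s(\q_e)\leq 1$, and $\mult_\s(\q_e)=1$ requires $\s\nmid m_e$.

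The proof then finishes by a short dichotomy. If $\s$ divides both $m_v$ and $m_w$, then $\s\mid m_v+m_w=m_e$, and by the previous step $\mult_\s(\q_e)=0$, so $\mult_\s(\p_e\q_e)=\mult_\s(\p_e)\leq 2$. Otherwise $\s$ divides at most one of $m_v,m_w$, so $\mult_\s(\p_e)\leq 1$, and combined with $\mult_\s(\q_e)\leq 1$ we again conclude $\mult_\s(\p_e\q_e)\leq 2$. The main obstacle (really the only subtlety) is that naively summing the two individual bounds gives $3$; the dichotomy above, built on the incompatibility between $\mult_\s(\p_e)=2$ and $\mult_\s(\q_e)\geq 1$, is what sharpens the estimate to the claimed~$2$. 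No cancellation or deeper cyclotomic identity is required, and the characteristic-zero hypothesis enters only through $\tilde\ell(e)\neq 0$ in $\KK$.
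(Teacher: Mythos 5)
Your proof is correct and in essence the same as the paper's: both hinge on the observation that $\mult_\s(\p_e)=2$ forces $\s\mid m_e$, which makes $\mult_\s(\q_e)=0$, while in all other cases each of $\p_e$, $\q_e$ contributes at most one; the paper merely organizes this as a four-case enumeration instead of your dichotomy, quoting rather than re-deriving the single-factor multiplicity facts. (Only your closing aside is slightly inaccurate: characteristic zero is used not just for $\tilde\ell(e)\neq 0$ in $\KK$ but also to ensure $t^{m_v}-1$ and $t^{m_e}-\zeta$ have simple roots, which would fail in characteristic $p$ dividing the exponents.)
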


\begin{proof}
Assuming $\mult_\s(\p_e\q_e)\geq 1$, these are the possibilities for~$\s$:
\begin{enumerate}
\item \label{cases:mult1}
If $d\mid m_v$ and $d\mid m_w$, then $\mult_\s(\p_e)=2$ and $\mult_\s(\q_e)=0$, since $d\mid m_e$.
\item \label{cases:mult2}
Otherwise, if say $d\mid m_v$ and $d\mid \tilde \ell_em_e$, then $\mult_\s(\p_e\q_e)=\mult_\s(\p_e)+\mult_\s(\q_e)=1+1=2$. 
\item \label{cases:mult3}
Otherwise, if say $d\mid m_v$ but $d\nmid \tilde \ell_em_e$, then $\mult_\s(\p_e\q_e)=\mult_\s(\p_e)=1$.
\item \label{cases:mult4}
Finally, if $d\nmid m_v$, $d\nmid m_w$, $d\mid \tilde \ell_em_e$, and $d\nmid m_e$, then $\mult_\s(\p_e\q_e)=\mult_\s(\q_e)=1$.
\end{enumerate}
This proves the claim.
\end{proof}

The multiplicity maps on the vertices $V_\Gamma\to \ZZ_{\geq 0}$, $v\mapsto \mult_\s(\p_v)$ and edges 
$E_\Gamma\to \ZZ_{\geq 0}$, $e\mapsto \mult_\s(\p_e\q_e)$ determine the invariant factors of $H_1(A^\chi_\Gamma;\KK)$.
We generalize this to a \emph{weight map} $w:\cF^f(\Gamma)\to \ZZ_{\geq 0}$ on the finite type flag complex $\cF^f(\Gamma)$ 
defined as $w(X):=\mult_\s(\p_X\q_X)$, which will play an important role in the general case.

\subsection{The multiplicity spectral sequence}
Given $\s\in \TT_\Gamma$ consider $w:\cF^f(\Gamma)\to \ZZ_{\geq 0}$, $w(X):=\mult_\s(\p_X\q_X)$ 
as defined above. Associated with this weight map one can construct the standard increasing weight filtration of 
simplicial complexes $F_{d,*}$ as follows:
$$F_{d,p}C_q:=\langle X\in \cF^f_q(\Gamma)\mid w(X)\leq p\rangle\subset C^f_q(\Gamma),$$
that is, generated by the $q$-simplices $X\in\cF^f(\Gamma)$ for which $\Phi_\s$ has multiplicity at most $p$ in 
the polynomial $\p_\sigma\q_\sigma$. For convenience, if $p<0$, then $F_{d,p}C_q=\{0\}$.
Note that $F_{d,p}C_q\subset F_{d,p+1}C_q$ and $\partial F_{d,p}C_{q+1}\subset F_{d,p}C_q$. The spectral sequence 
associated with this filtration starts with the term $E^0_{d,(p,q)}:=F_{d,p}C_{p+q}/F_{d,p-1}C_{p+q}$ and 
$\partial^0:E^0_{d,(p,q)}\to E^0_{d,(p,q-1)}$ is well defined by $\partial_{*}$ since 
$\partial_{p+q} F_{d,p}C_{p+q}\subset F_{d,p}C_{p+q-1}$ and 
$\partial_{p+q} F_{d,p-1}C_{p+q}\subset F_{d,p-1}C_{p+q-1}$. 
Note that $E^1_{d,(p,q)}=H_{p+q}(F_{d,p}C_{*})$ and the spectral sequence $\{(E^k_{d,(p,q)},\partial^k)\}$ is well defined
where $\partial^k$ is a morphism of type $(-k,k-1)$. This weight map in more generality can be found in~\cite{SV:13}.

\begin{prop}
\label{prop-ss}
The multiplicity spectral sequence $(E^k_{d(p,q)},\partial^k)$ associated with the flag complex $\cF^f(\Gamma)$ of 
FC-type is bounded and satisfies the following properties:
\begin{enumerate}[label=\textrm{(\roman*)}]
 \item \label{prop-ss-1}
 $E^0_{d,(p,q)}=\{0\}$ if $p+q<-1$
 \item \label{prop-ss-2}
 $E^0_{d,(p,-p-1)}=\begin{cases}
             \langle\sigma_\emptyset\rangle_{\KK} & \textrm{ if } p=0\\
             \{0\} & \textrm{ otherwise }
             \end{cases}$
 \item \label{prop-ss-3}
 $E^0_{d,(p,q-p)}=\begin{cases} 
      \langle \sigma_X \mid X\in \cF^f_q(\Gamma), w(X)=p\rangle_{\KK} & \textrm{ if } q\geq 0, p\leq q+1\\
      \{0\} & \textrm{ otherwise. }\\
      \end{cases}$
\end{enumerate}
\end{prop}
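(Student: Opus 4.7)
The plan is to verify the three statements directly from the definition $E^0_{d,(p,q)} = F_{d,p} C^f_{p+q}(\Gamma) / F_{d,p-1} C^f_{p+q}(\Gamma)$. Since the filtration is generated by basis elements $\sigma_X$ and the weight $w$ takes integer values, this quotient is canonically identified with the $\KK$-span of those $\sigma_X$ with $X \in \cF^f_{p+q}(\Gamma)$ and $w(X) = p$. Under this identification, statement (i) is immediate because every $n$-simplex $X$ satisfies $|V_X| = n+1 \geq 0$, so $C^f_n(\Gamma) = 0$ for $n < -1$. Statement (ii) is equally direct: the unique $(-1)$-simplex is $\sigma_\emptyset$ with $\p_\emptyset = \q_\emptyset = 1$ (empty products), so $w(\sigma_\emptyset) = 0$ and the only surviving piece at total degree $-1$ occurs at $p = 0$.

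The core of (iii) is the upper bound $p \leq q + 1$, which amounts to proving the inequality $w(X) \leq |V_X|$ for every $X \in \cF^f(\Gamma)$. This is the main obstacle. The plan for proving it is to invoke the structural result from Example~\ref{exam:dihedral}: the only irreducible spherical EAGs are the cyclic type $\A_1$ (a single vertex) and the dihedral types (a single edge labeled $2\tilde\ell \geq 4$). Consequently, every $X \in \cS^f_\Gamma$ decomposes as a 2-join $X = X_1 * \cdots * X_r$ where each piece $X_i$ is a single vertex or a dihedral edge.

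The 2-join operation introduces only label-$2$ edges between distinct components, and for these $\tilde\ell = 1$ yields $q_1(t_e) = 1$, contributing trivially to $\q_X$. Hence $\p_X = \prod_i \p_{X_i}$ and $\q_X = \prod_i \q_{X_i}$, giving the additivity $w(X) = \sum_i w(X_i)$. For each component, $w(X_i) \leq 1 = |V_{X_i}|$ when $X_i = \{v\}$ is a single vertex, because $t^{m_v} - 1$ has only simple roots; and $w(X_i) \leq 2 = |V_{X_i}|$ when $X_i$ is a dihedral edge by Lemma~\ref{lem:multiplicity}. Summing gives $w(X) \leq \sum_i |V_{X_i}| = |V_X|$, which is exactly the needed bound.

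Boundedness of the spectral sequence then follows by combining (i) and (iii) with the finite-dimensionality of $\cF^f(\Gamma)$: only finitely many bidegrees $(p,q)$ can support nonzero $E^0$ terms. The remaining work is entirely bookkeeping once the 2-join decomposition is in hand and Lemma~\ref{lem:multiplicity} supplies the dihedral estimate.
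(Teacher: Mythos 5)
Your proof is correct, and for the key inequality $w(X)\leq|V_X|$ it takes a genuinely different (and cleaner) route than the paper. The paper proves part~\ref{prop-ss-3} by induction on the number of vertices: Lemma~\ref{lem:multiplicity} is the base case, and in the inductive step $X=X'\cup\{v\}$ the FC-type condition is used locally, to show that the edges at $v$ with $\mult_\s(\q_e)=1$ form part of a matching and to bound how much the weight can grow when one vertex is added; this requires a slightly delicate count (the ``at most $q+1-m$ such edges'' step, which needs the observation that the dihedral partner $v'$ of $v$ then contributes nothing to $w(X')$). You instead make the structural input global: since every $X\in\cS^f_\Gamma$ is a $2$-join of vertices and dihedral edges (Example~\ref{exam:dihedral}), and the join edges have $\tilde\ell=1$ so $\q$ contributes a unit, you get the exact factorization $\p_X\q_X=\prod_i\p_{X_i}\q_{X_i}$, hence additivity $w(X)=\sum_i w(X_i)$, and the bound follows from $w(X_i)\leq 1$ for vertices (here the standing hypotheses of the section, $\mathrm{char}(\KK)=0$ and $\chi$ non-resonant, are what guarantee $t^{m_v}-1\neq 0$ with simple roots) together with Lemma~\ref{lem:multiplicity} for dihedral edges. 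Both arguments ultimately rest on the same two facts (the classification of irreducible spherical even graphs and the dihedral estimate), but your decomposition argument avoids the induction and the case analysis on $w(v)$, and it yields the stronger exact formula $w(X)=\sum_i w(X_i)$ rather than just the inequality; the paper's inductive formulation has the advantage of quoting only the FC-type condition directly rather than the explicit join decomposition. Parts~\ref{prop-ss-1}, \ref{prop-ss-2} and the boundedness are handled exactly as in the paper.
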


\begin{proof}
Part~\ref{prop-ss-1} (resp.~\ref{prop-ss-2}) is an immediate consequence of 
$C^f_k(\Gamma)=\{0\}$ if $k<-1$ ($C^f_{-1}(\Gamma)=\langle\sigma_{\emptyset}\rangle$). 
Part~\ref{prop-ss-3} can be proved by induction. The first step, $q=1$, is  Lemma~\ref{lem:multiplicity}. 
Let $m=w(X')$, and let us consider $X=X'\cup\{v\}$ a $(q+1)$-simplex. If $w(v)=1$, then the only new edges 
$e=\{v,v'\}$ such that $\mult_\s(\q_e)=1$ are those for which $\mult_\s(\p_{v'})=0$. Moreover, by the FC-type condition,
if $\mult_\s(\q_e)=1$ with $e=\{v,v'\}$, then for all $e'=\{v',v''\}$ one has $\mult_\s(\q_{e'})=0$. 
Hence there are at most $q+1-m$ of such edges. 
Thus $w(X)= w(v)+\sum_{v'\in X'} \mult_\s(\q_e)+w(X')\leq 1+(q+1-m)+m=q+2$.
As a consequence of~\ref{prop-ss-1} and~\ref{prop-ss-3}, $E^0_{d,(p,q)}=\{0\}$ if $p<0$, $p+q<-1$, and $p+q>w(\Gamma)$
the clique number of $\Gamma$, that is, the dimension of $\cF^f(\Gamma)$. Hence the multiplicity spectral sequence is bounded.
\end{proof}

\subsection{The $(\Phi_\s)$-adic filtration}
In order to study the primary part of $H_{k+1}(A^\chi_\Gamma;\KK)$, in the decomposition given 
in Theorem~\ref{teo:torsionhk} we will fix $\s\in \TT_\Gamma$, denote by $\Phi_\s(t)$ the cyclotomic polynomial of order $\s$, 
and consider $\hat \Lambda$ the completion of $\Lambda=\KK[t^{\pm 1}]$ with respect to the $(\Phi_\s)$-adic filtration as defined 
in~\cite[\S$4.2$]{Papadima-Suciu-Toric}. If $\KK_d=\Lambda/(\Phi_\s)$ denotes the residue field and 
$\iota:\Lambda\hookrightarrow \hat\Lambda$ the natural inclusion, then $\gr(\hat \Lambda)\cong \KK_d[\tau]$ and if 
$f\in \Lambda$ is a polynomial, then $\iota(f)$ is a unit in $\hat \Lambda$
if and only if its class in $\KK_d$ is non-trivial, that is, $\Phi_\s\not |f$. 

Alternatively, one can work with $\hat\Lambda:=\KK_\s[t]_P$ where the subindex ${}_P$ means localization at the ideal $P$ 
which is the ideal generated by $\tau=t-\zeta_\s$ with $\zeta_\s\in\KK_\s$ root of $\Phi_\s(t)$.
\begin{prop}
\label{prop:ns}
Under the above conditions
$$
\begin{aligned}
\dim_{\KK_d} \left(H_{k}(C^\chi_*(\Gamma);\hat\Lambda)\otimes_{\hat\Lambda}\frac{\hat\Lambda}{(\tau^s)}\right)
&=sr_k+\sum_{j=1}^{s-1} jn_{k,j}(d)+\sum_{j\geq s} s n_{k,j}(d)\\
\dim_{\KK_d} \Tor^{\hat\Lambda}_1\left( H_{k}(C^\chi_*(\Gamma);\hat\Lambda),\frac{\hat\Lambda}{(\tau^s)}\right)
&=\sum_{j=1}^{s-1} jn_{k,j}(d)+\sum_{j\geq s} s n_{k,j}(d).
\end{aligned}
$$
\end{prop}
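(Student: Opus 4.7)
The plan is to combine the structure theorem (Theorem~\ref{teo:torsionhk}) with base change to the local ring $\hat\Lambda$. First I would use that $\hat\Lambda$ is flat over $\Lambda=\KK[t^{\pm 1}]$ together with Proposition~\ref{prop:isom} to write
\begin{equation*}
H_k(C^\chi_*(\Gamma);\hat\Lambda)\cong H_{k+1}(A^\chi_\Gamma;\KK)\otimes_{\Lambda}\hat\Lambda.
\end{equation*}
Applying Theorem~\ref{teo:torsionhk} and observing that $\Phi_{d'}(t)$ becomes a unit in $\hat\Lambda$ for every $d'\neq d$ (since $\Phi_{d'}(\zeta_d)\neq 0$) and so does $t-1$ (because $d>1$), all of the corresponding cyclic quotients vanish after base change. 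Since $\Phi_d(t)=(t-\zeta_d)g(t)$ with $g(\zeta_d)\neq 0$, we have the identification of ideals $(\Phi_d)=(\tau)$ in $\hat\Lambda$, giving
\begin{equation*}
H_k(C^\chi_*(\Gamma);\hat\Lambda)\cong \hat\Lambda^{r_k}\oplus\bigoplus_{j=1}^\infty\left(\frac{\hat\Lambda}{(\tau^j)}\right)^{n_{k,j}(d)}.
\end{equation*}

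Next, I would compute the two dimensions summand by summand. The free part $\hat\Lambda^{r_k}$ contributes $s\,r_k$ to the tensor product and $0$ to the $\Tor$. For each cyclic summand, applying the free resolution $0\to\hat\Lambda\xrightarrow{\tau^j}\hat\Lambda\to \hat\Lambda/(\tau^j)\to 0$ and tensoring with $\hat\Lambda/(\tau^s)$ yields the exact sequence
\begin{equation*}
0\to \Tor_1^{\hat\Lambda}\!\left(\tfrac{\hat\Lambda}{(\tau^j)},\tfrac{\hat\Lambda}{(\tau^s)}\right)\to \tfrac{\hat\Lambda}{(\tau^s)}\xrightarrow{\tau^j}\tfrac{\hat\Lambda}{(\tau^s)}\to \tfrac{\hat\Lambda}{(\tau^j)}\otimes_{\hat\Lambda}\tfrac{\hat\Lambda}{(\tau^s)}\to 0,
\end{equation*}
from which both the kernel and the cokernel of multiplication by $\tau^j$ on $\hat\Lambda/(\tau^s)$ are easily seen to have $\KK_d$-dimension $\min(j,s)$.

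Adding up the contributions and splitting each sum at $j=s$ via $\min(j,s)=j$ for $j<s$ and $\min(j,s)=s$ for $j\geq s$ produces the stated formulas. The main step of substance is really the first one, namely identifying which cyclic summands survive base change to $\hat\Lambda$ (the units argument for $\Phi_{d'}$ with $d'\neq d$ and for $t-1$); once this is in place, everything reduces to an elementary $\Tor$ computation over the discrete valuation ring $\hat\Lambda$ with uniformiser $\tau$ and residue field~$\KK_d$.
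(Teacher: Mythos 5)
Your proposal is correct and follows essentially the same route as the paper: pass to the decomposition of $H_k(C^\chi_*(\Gamma);\hat\Lambda)$ into $\hat\Lambda^{r_k}\oplus\bigoplus_j(\hat\Lambda/(\tau^j))^{n_{k,j}(d)}$ and compute $\otimes$ and $\Tor_1$ summand by summand, each cyclic piece contributing $\min(j,s)$. The paper merely states the two formulas for $\Tor_1$ and $\otimes$ of cyclic $\hat\Lambda$-modules and leaves the flat base-change and unit arguments (for $\Phi_{d'}$, $d'\neq d$, and $t-1$) implicit, which you spell out correctly.
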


\begin{proof}
It follows immediately from 
$$
\Tor^{\hat\Lambda}_1\left( \frac{\hat\Lambda}{(\tau^{s_1})},\frac{\hat\Lambda}{(\tau^{s_2})} \right)
=\frac{(\tau^{M})}{(\tau^{s_1+s_2})}
$$
and 
$$
\frac{\hat\Lambda}{(\tau^{s_1})}\otimes_{\hat\Lambda} \frac{\hat\Lambda}{(\tau^{s_2})}=
\frac{\hat\Lambda}{(\tau^{m})},
$$
where $M:=\max\{s_1,s_2\}$, $m:=\min\{s_1,s_2\}$.
\end{proof}

In order to describe the homology of the Artin kernels we need to introduce some notation associated with invariants 
of the multiplicity-spectral sequence. Let us denote $h^s_{d,(p,q)}:=\dim_\KK E^s_{d,(p,q)}$, 
$h^s_{\s,q}:=\sum_{p\geq 0} h^s_{d,(p,q-p)}$, and 
$\chi^{\text{rel}}_{k}(E^s_\s):=\sum_{q=0}^k (-1)^{k-q}(h^s_{\s,q}-h^\infty_{\s,q})$, this is the 
\emph{$k$-th relative Euler characteristic} of $E^s_\s$. Note that these are combinatorial invariants of the flag 
complex $\cF^f(\Gamma)$ and the weight map.

\begin{prop}
\label{prop:hpq}
$$
\dim_{\KK_\s} H_{k}( C^\chi_*(\Gamma); {\hat\Lambda}/{(\tau^{s})})=\sum_{j=1}^s h^j_{\s,k}.
$$
Moreover, 
$(E^\bullet_{\s,(p,q)}\otimes_{\hat\Lambda} \frac{\hat\Lambda}{(\tau^{s})},d^\bullet)$
degenerates at the $s$-th page.
\end{prop}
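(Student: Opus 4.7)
The plan is to reinterpret the multiplicity spectral sequence as arising from a $\tau$-adic weight filtration on the chain complex $C^\chi_*(\Gamma)\otimes_{\Lambda}\hat\Lambda$, and to use this identification to count dimensions modulo $\tau^s$. The key observation is that for $Y\subset X$ a codimension-one face, the matrix entry of $\partial^\chi$ is $\pm\p_X\q_X/(\p_Y\q_Y)$, whose $\Phi_\s$-valuation equals $w(X)-w(Y)$; in $\hat\Lambda$ it therefore factors as $\tau^{w(X)-w(Y)}u_{X,Y}$ with $u_{X,Y}\in\hat\Lambda^\times$. Consequently the weight filtration $F_p=\bigoplus_{w(X)\leq p}\hat\Lambda\cdot\sigma^\chi_X$ is preserved by $\partial^\chi$, and after reducing modulo $\tau^s$ it produces a spectral sequence of $\KK_\s$-vector spaces whose pages coincide with $E^\bullet_\s\otimes_{\hat\Lambda}\hat\Lambda/(\tau^s)$.

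For the degeneration, the $r$-th differential in this filtered complex is induced by the component of $\partial^\chi$ that lowers weight by exactly $r$, whose coefficients are divisible by $\tau^r$ in $\hat\Lambda$ by the paragraph above. For $r\geq s$ this differential therefore vanishes identically modulo $\tau^s$, yielding the claimed degeneration at the $s$-th page.

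For the dimension formula, I would argue by induction on $s$. The base case $s=1$ is immediate: modulo $\tau$ only the weight-preserving component of $\partial^\chi$ survives, which is exactly the differential $\partial^0$ of the multiplicity spectral sequence, so $H_k(C^\chi_*\otimes\hat\Lambda/(\tau))$ has $\KK_\s$-dimension $h^1_{\s,k}$. For the inductive step, I would tensor the short exact sequence
$$0\to\hat\Lambda/(\tau^{s-1})\xrightarrow{\;\tau\;}\hat\Lambda/(\tau^s)\to\hat\Lambda/(\tau)\to 0$$
with $C^\chi_*(\Gamma)$ and analyze the resulting long exact sequence in homology. Using the degeneration established above together with the identification of the pages of the weight-filtered spectral sequence with $E^\bullet_\s\otimes\hat\Lambda/(\tau^s)$, the connecting homomorphism factors through the $s$-th differential of the multiplicity spectral sequence, so the long exact sequence breaks into short exact pieces yielding
$$\dim_{\KK_\s}H_k(C^\chi_*;\hat\Lambda/(\tau^s))=\dim_{\KK_\s}H_k(C^\chi_*;\hat\Lambda/(\tau^{s-1}))+h^s_{\s,k},$$
which closes the induction.

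The hard part will be the precise identification of the spectral sequence arising from the $\tau$-adic weight filtration on $C^\chi_*\otimes\hat\Lambda$ with the multiplicity spectral sequence of the paper (which is defined purely on the flag complex $C^f_*$), and then controlling the connecting homomorphism in the inductive step so that the long exact sequence splits into exactly the correct short exact pieces. This will require a careful analysis of how tensoring with $\hat\Lambda/(\tau^s)$ interacts with the differentials on each page, showing that each successive page contributes exactly one $\tau$-layer of depth to the homology—an analogue of the classical behaviour of $\tau$-adic spectral sequences of bounded complexes of free modules over a discrete valuation ring.
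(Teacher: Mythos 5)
Your skeleton matches the paper's (the valuation-equals-weight-drop observation, a filtered structure on $C^\chi_*(\Gamma)\otimes\hat\Lambda/(\tau^{s})$ whose pages are compared with the multiplicity spectral sequence, base case $s=1$, induction on $s$), but the two places where you deviate are exactly where your argument has gaps. First, the degeneration. The paper filters $C^\chi_*(\Gamma)\otimes\hat\Lambda/(\tau^{s})$ by the \emph{$\tau$-adic} filtration $F^q=(\tau^q)\cdot C^\chi_*(\Gamma)$, which after reduction mod $\tau^s$ has length $s$, so $d^r=0$ for $r\geq s$ is automatic. You filter by weight instead; that filtration has length governed by the maximal weight (roughly $\dim\cF^f(\Gamma)+2$), independent of $s$, so degeneration at page $s$ is not automatic, and your justification --- that $d^r$ ``is induced by the component of $\partial^\chi$ that lowers weight by exactly $r$, whose coefficients are divisible by $\tau^r$'' --- misdescribes how higher differentials of a filtered complex are computed. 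A class on page $r$ is represented by a zig-zag $x=x_p+(\text{lower-weight corrections})$ with $\partial x\in F_{p-r}$, and the weight-$(p-r)$ component of $\partial x$ receives contributions $\partial_i x_a$ with $i<r$ from the corrections, which are not divisible by $\tau^r$. The conclusion is true, but proving it along your route essentially forces you back to the comparison with the $\tau$-adic filtration (or a graded Rees-type argument), i.e.\ to what the paper does.

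Second, the inductive step is internally inconsistent as stated. If the long exact sequence of $0\to\hat\Lambda/(\tau^{s-1})\to\hat\Lambda/(\tau^{s})\to\hat\Lambda/(\tau)\to 0$ really broke into short exact pieces (connecting maps zero), the increment would be $h^1_{\s,k}$, not the claimed $h^s_{\s,k}$; since $h^1_{\s,k}\geq h^s_{\s,k}$ with strict inequality whenever some $d^1,\dots,d^{s-1}$ is nonzero in the relevant total degrees, the Bockstein-type connecting homomorphisms must in general be nonzero, and their ranks must be identified with exactly the total rank of those differentials. That identification is the whole content of the induction, and you explicitly defer it (``the hard part''), so the main step remains unproven. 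The paper avoids this by computing $\dim_{\KK_\s}H_k(C^\chi_*(\Gamma);\gr^j\hat\Lambda)=h^j_{\s,k}$ directly from the (page-one degenerate) spectral sequences of the graded quotients of its length-$s$ filtration and then assembling the answer by induction and the K\"unneth formula, rather than by controlling connecting maps in a Bockstein long exact sequence. (Your final caveat about identifying the pages with $E^\bullet_\s$ despite the unit factors $u_{X,Y}$ is a genuine point, but the paper's proof makes the same silent simplification, so I would not count it against you.)
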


\begin{proof}
The spectral sequence $(E^\bullet_{\s,(p,q)}\otimes_{\hat\Lambda} \frac{\hat\Lambda}{(\tau^{s})},d^\bullet)$ 
associated with the complex $C^\chi_q(\Gamma)\otimes_{\hat\Lambda} \frac{\hat\Lambda}{(\tau^{s})}$, where 
$$\partial(\sigma^\chi_X\otimes 1)=\sum_{X_v}\langle X_v|X\rangle\tau^{w(X)-w(X_v)}\sigma^\chi_{X_v}\otimes 1$$
and the decreasing filtration $F^q=(\tau^q)\hat\Lambda\cdot C^\chi_p(\Gamma)$ is bounded and such that $d^q=0$ if $q\geq s$.
By an argument generalizing~\cite[Corollary 5.5]{Papadima-Suciu-Toric} the abutment of this spectral sequence is 
$H_{k}( C^\chi_*(\Gamma); {\hat\Lambda}/{(\tau^{s})})$. Moreover, if $\sigma^\chi_X\in F_{\s,p}C^\chi_{q}(\Gamma)$, then 
$$\partial(\sigma^\chi_X\otimes 1)=\sum_{i=0}^{s-1} \partial_i(\sigma^\chi_X\otimes 1)\tau^i,$$
where $\partial_i(\sigma^\chi_X\otimes 1)\in F_{\s,p-i}C^\chi_{q-1}(\Gamma)$. 

For $s=1$, the first page of this spectral sequence coincides with that of $(E^\bullet_{\s,(p,q)},d)$ and it degenerates 
at this page, hence $\dim_{\KK_\s} H_{k}( C^\chi_*(\Gamma); {\hat\Lambda}/{(\tau)})=\sum_{p+q=k} h^1_{\s,(p,q)}=h^1_{\s,k}$.
Similarly, $\dim_{\KK_\s} H_{k}( C^\chi_*(\Gamma); \gr^j(\hat\Lambda))=\sum_{p+q=k} h^j_{\s,(p,q)}=h^j_{\s,k}$.
The result follows by induction and the K\"unneth formula.
\end{proof}

\subsection{The general $H_{k+1}(A^\chi_\Gamma;\KK)$ case}
The previous discussion provides the following formula for the invariants of~$H_{k+1}(A^\chi_\Gamma;\KK)$.

\begin{teo}
\label{thm:main}
Under the previous notation,
\begin{equation}
\label{eq:thm1}
r_k+\sum_{j\geq s} n_{k,j}(d)=\sum_{p\geq 0} h^s_{\s,(p,k-p)}- \sum_{j\geq s} n_{k-1,j}(d).
\end{equation}
Equivalently,
\begin{equation}
\label{eq:thm2}
\sum_{j\geq s} n_{k,j}(d)=\chi^{\text{rel}}_{k}(E^s_\s).
\end{equation}
This determines completely the $\Phi_\s$-primary part of~$H_{k+1}(A^\chi_\Gamma;\KK)$.

Moreover, the Jordan blocks associated with the torsion of $H_{k+1}(A^\chi_\Gamma;\KK)$ have size at most~$k+2$.
\end{teo}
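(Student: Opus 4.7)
The plan is to extract the invariants $N_k(s) := \sum_{j\geq s} n_{k,j}(d)$ by comparing the two pieces of homological data provided by Propositions~\ref{prop:ns} and~\ref{prop:hpq}. The bridge is the short exact sequence of $\hat\Lambda$-modules
$$0 \to \hat\Lambda \xrightarrow{\tau^s} \hat\Lambda \to \hat\Lambda/(\tau^s) \to 0;$$
since $C^\chi_*(\Gamma;\hat\Lambda)$ is a complex of free $\hat\Lambda$-modules, it yields a universal coefficient short exact sequence relating $H_k(C^\chi_*(\Gamma);\hat\Lambda/(\tau^s))$ to the tensor and Tor of $H_*(C^\chi_*(\Gamma);\hat\Lambda)$ with $\hat\Lambda/(\tau^s)$. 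Taking $\KK_\s$-dimensions, substituting the values from Proposition~\ref{prop:hpq} on the left and Proposition~\ref{prop:ns} on the right (using the elementary identity $\sum_{j\geq 1}\min(j,s)\,n_{k,j}(d) = \sum_{i=1}^s N_k(i)$), and then subtracting the resulting formula at $s-1$ from the one at $s$, gives the recursion $h^s_{\s,k} = r_k + N_k(s) + N_{k-1}(s)$, which is precisely \eqref{eq:thm1}.

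Next I would iterate this recursion downward with boundary condition $N_{-1}(s) = 0$ (valid because $H_{-1}(C^\chi_*(\Gamma)) = 0$) to obtain
$$N_k(s) = \sum_{q=0}^k (-1)^{k-q}\bigl(h^s_{\s,q} - r_q\bigr).$$
The multiplicity filtration on $C^f_*(\Gamma)$ is bounded, so the spectral sequence $(E^\bullet_\s,\partial^\bullet)$ converges to $\tilde H_*(\cF^f(\Gamma))$, and consequently $h^\infty_{\s,q} = r_q$ for every $q \geq 0$. Replacing $r_q$ by $h^\infty_{\s,q}$ then rewrites the alternating sum as $N_k(s) = \chi^{\text{rel}}_{k}(E^s_\s)$, which is \eqref{eq:thm2}. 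Since $n_{k,j}(d) = N_k(j) - N_k(j+1)$, this completely determines the $\Phi_\s$-primary part of $H_{k+1}(A^\chi_\Gamma;\KK)$.

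For the Jordan block bound it suffices to prove $N_k(k+3) = 0$, equivalently (by \eqref{eq:thm2}) that $h^{k+3}_{\s,q} = h^\infty_{\s,q}$ for every $0 \leq q \leq k$. By Proposition~\ref{prop-ss} the support of $E^0_\s$ lies in the region $P \geq 0$, $Q \geq -1$, $P \leq P+Q+1$, and $\partial^{s}$ has bidegree $(-s, s-1)$. A short bidegree analysis will then show that outgoing differentials from total degree $T$ die at page $s \geq T+2$, while incoming differentials from total degree $T+1$ die only at page $s \geq T+3$, the extra page being forced by the extremal corner $(T+2,-1)$. Setting $T = k$ yields stabilisation of the spectral sequence at page $k+3$ in every total degree $q \leq k$, as required.

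The main obstacle is the subtle off-by-one in this last step: the sharp bound $k+2$ (rather than $k+1$) is dictated by incoming differentials from the top-weight corner, which persist exactly one page longer than the outgoing ones; sharpness is confirmed by the example at the end of the paper.
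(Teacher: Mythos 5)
Your argument is correct and is essentially the paper's own proof: the same universal coefficient sequence with ${\hat\Lambda}/{(\tau^{s})}$ coefficients combined with Propositions~\ref{prop:ns} and~\ref{prop:hpq}, comparison of the levels $s$ and $s-1$ to get~\eqref{eq:thm1}, downward induction together with $h^\infty_{\s,q}=r_q$ to get~\eqref{eq:thm2}, and the Jordan bound from the support constraint of Proposition~\ref{prop-ss}\ref{prop-ss-3} (your bidegree analysis is exactly the fleshing-out of the paper's one-line "consequence"). Only note that your boundary condition should be justified as $H_{-1}(C^\chi_*(\Gamma);\hat\Lambda)=0$, since over $\KK[t^{\pm1}]$ this group is $\KK[t^{\pm1}]/(t-1)$ and it only vanishes after base change to $\hat\Lambda$ because $t-1$ is a unit there ($\s>1$); the conclusion $\sum_{j\geq s}n_{-1,j}(\s)=0$ is unaffected.
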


\begin{proof}
By the universal coefficient theorem $H_{k}(C^\chi_*(\Gamma);\hat\Lambda)=H_{k}(C^\chi_*(\Gamma);\KK)\otimes_\KK \hat\Lambda$ and 
\begin{equation}
\label{eq:UCTh}
0\to H_{k}(C^\chi_*(\Gamma);\hat\Lambda)\otimes_{\hat\Lambda} {\hat\Lambda}/{(\tau^{s})}\to
H_{k}(C^\chi_*(\Gamma);{\hat\Lambda}/{(\tau^{s})})\to 
\Tor^{\hat\Lambda}_1\left( H_{k-1}(C^\chi_*(\Gamma);\hat\Lambda),{\hat\Lambda}/{(\tau^{s})} \right)\to 0.
\end{equation}
The result follows from comparing~\eqref{eq:UCTh} for $s$ and $s-1$ and using Propositions~\ref{prop:ns} and \ref{prop:hpq}.
By Proposition~\ref{prop:ns}
$$
\dim_{\KK_\s}\left( H_{k}(C^\chi_*(\Gamma);\hat\Lambda)\otimes_{\hat\Lambda} {\hat\Lambda}/{(\tau^{s})} \right)-
\dim_{\KK_\s}\left( H_{k}(C^\chi_*(\Gamma);\hat\Lambda)\otimes_{\hat\Lambda} {\hat\Lambda}/{(\tau^{s-1})} \right)=
r_k+\sum_{j\geq s} n_{k,j}(d).
$$
By Proposition~\ref{prop:hpq}
$$
\dim_{\KK_\s}\left( H_{k}(C^\chi_*(\Gamma);{\hat\Lambda}/{(\tau^{s})}) \right)-
\dim_{\KK_\s}\left( H_{k}(C^\chi_*(\Gamma);{\hat\Lambda}/{(\tau^{s-1})}) \right)=
\sum_{i\geq 0} h^s_{\s,(i,k-i)}.
$$
By Proposition~\ref{prop:ns}
$$
\dim_{\KK_\s}\Tor^{\hat\Lambda}_1\left( H_{k-1}(C^\chi_*(\Gamma);\hat\Lambda),\frac{\hat\Lambda}{(\tau^{s})} \right)-
\dim_{\KK_\s}\Tor^{\hat\Lambda}_1\left( H_{k-1}(C^\chi_*(\Gamma);\hat\Lambda),\frac{\hat\Lambda}{(\tau^{s-1})} \right)
=\sum_{j\geq s} n_{k-1,j}(d).
$$
By~\eqref{eq:UCTh} one has
$$
r_k+\sum_{j\geq s} n_{k,j}(d)=\sum_{i\geq 0} h^s_{\s,(i,k-i)}-\sum_{j\geq s} n_{k-1,j}(d).
$$
To obtain~\eqref{eq:thm2} it is enough to use $r_k=\sum_{i\geq 0} h^\infty_{\s,(i,k-i)}$ and induction.
The \emph{moreover} part is a consequence of Proposition~\ref{prop-ss}\ref{prop-ss-3}.
\end{proof}

\begin{exam}{\rm
\label{exam:square-H1}
Consider Figure~\ref{fig:ejemplocuadrado} as a graph whose edges are labeled by the even numbers outside
of the brackets on the edges. The even Artin group $A_\Gamma$ associated with this labeled graph has a presentation
$$
A_\Gamma=\langle g_1,g_2,g_3,g_4: [g_1,g_4]=1, (g_1g_2)^2=(g_2g_1)^2, (g_2g_3)^2=(g_3g_2)^2, (g_3g_4)^2=(g_4g_3)^2\rangle
$$
Consider the character $\chi$ is given in Figure~\ref{fig:ejemplocuadrado} by sending each generator at $v_i$ to the 
corresponding number in parenthesis. The first homology of the Artin kernel $A_\Gamma^\chi$ associated with $\chi$ 
can be studied using the spectral sequence associated with the multiplicity filtration. 
For instance, note that~$\TT_\Gamma=\{2,3,6\}$.

\begin{figure}[ht]
\begin{tikzpicture}
\tikzstyle{pto} = [circle, minimum width=8pt, fill, inner sep=0pt]
\node[pto] (n1) at (3,3) {};
\node[pto] (n2) at (0,0) {};
\node[pto] (n3) at (3,0) {};
\node[pto] (n4) at (0,3) {};
\draw (1.5,0) node[below] {$4,[1,2]$} ;
\draw (1.5,3) node[above] {$4,[1,2]$} ;
\draw (3,1.5) node[right] {$4,[1,2]$} ;
\draw (0,1.5) node[left] {$[0,1],2$} ;
\draw (n1) node[above right] {$(1),[0,0]$} ;
\draw (n1) node[below left] {$v_3$} ;
\draw (n2) node[below left] {$[0,0],(1)$} ;
\draw (n2) node[above right] {$v_1$} ;
\draw (n3) node[below right] {$(2),[0,1]$} ;
\draw (n3) node[above left] {$v_2$} ;
\draw (n4) node[above left] {$[0,1],(2)$} ;
\draw (n4) node[below right] {$v_4$} ;
\draw (n2) -- (n3) -- (n1) -- (n4) -- (n2);
\end{tikzpicture}
\caption{Even Artin graph with character}
\label{fig:ejemplocuadrado}
\end{figure}
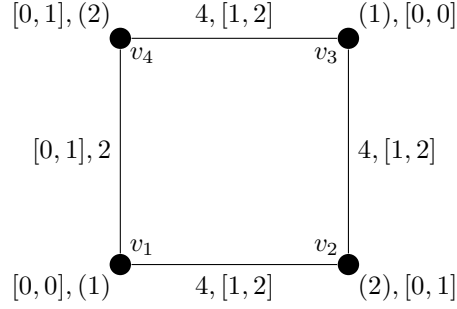
The filtration given by $F_{6,*}$ (resp. $F_{2,*}$) can be summarized by labeling vertices 
and edges of $\Gamma$ with their corresponding weight. This is done in Figure~\ref{fig:ejemplocuadrado}
using the first (resp. second) number in brackets. Therefore, for $F_{6,*}$ note that $E^1_{6,(0,0)}\cong \KK^2$ is generated 
by the cycles $\langle v_2-v_1,v_3-v_1\rangle$ whereas $v_4-v_1$ is the image of the edge $e_{1,4}=\{v_1,v_4\}\in F_{6,0}C_1$.
This spectral sequence degenerates at $E^\infty_{6,(p,q)}=E^2_{6,(p,q)}$. Moreover,
$$E^2_{6,(p,q)}=\begin{cases} \KK & \textrm{ if } p=1, q=0\\ 0 & \textrm{ otherwise.}\end{cases}$$
The only other non-zero term is $E^1_{6,(1,0)}\cong \KK^3$ generated by the edges $\langle e_{1,2},e_{2,3},e_{3,4}\rangle$.
Hence the only non-trivial dimension in the spectral sequence $E^k_{6,(p,q)}$, $k\geq 1$ is $h^1_{6,(0,0)}=2$. 
By Theorem~\ref{thm:main} one obtains
$$
\begin{aligned}
n_{0,1}(6)+n_{0,2}(6)&=\chi^{\text{rel}}_0(E^1_6)=h^1_{6,0}-h^\infty_{6,0}=2,\\
n_{0,2}(6)&=\chi^{\text{rel}}_0(E^2_6)=h^2_{6,0}-h^\infty_{6,0}=0,\\
n_{1,1}(6)+n_{1,2}(6)+n_{1,3}(6)&=\chi^{\text{rel}}_1(E^1_6)=(h^1_{6,1}-h^\infty_{6,1})-(h^1_{6,0}-h^\infty_{6,0})=(3-1)-2=0.\\
\end{aligned}
$$
Thus $n_{0,2}(6)=0$, $n_{0,1}(6)=2$, and $n_{1,1}(6)=n_{1,2}(6)=n_{1,3}(6)=0$.

Analogously, for $F_{2,*}$ note that $E^1_{2,(0,0)}\cong \KK^2$ 
$$
\array{lclclclcl}
0 & \to & E^0_{2,(2,-1)}=\langle e_{1,2},e_{2,3},e_{3,4}\rangle & \to & E^0_{2,(2,-2)}=\{0\} & \to & 0& \to & 0\\
0 & \to & E^0_{2,(1,0)}=\langle e_{1,4}\rangle & \to & E^0_{2,(1,-1)}=\langle v_2,v_4\rangle & \to & 0& \to & 0\\
  &     & e_{1,4}                          & \mapsto & v_4                           &  & \\
0 & \to & E^0_{2,(0,1)}=\{0\} & \to & E^0_{2,(0,0)}=\langle v_{1},v_3\rangle & \to & E^0_{2,(0,-1)}=
\langle \sigma_\emptyset\rangle& \to & 0\\
\endarray
$$
which gives 
$$E^1_{2,(p,q)}=
\begin{cases}
\langle v_3-v_1\rangle & \textrm{ if } p=q=0\\
\langle v_2\rangle & \textrm{ if } p=1,q=-1\\
\langle e_{1,2},e_{2,3},e_{3,4}\rangle & \textrm{ if } p=2,q=-1\\
\{0\} & \textrm{ otherwise.}
\end{cases}
$$
$$E^2_{2,(p,q)}=
\begin{cases}
\langle v_3-v_1\rangle & \textrm{ if } p=q=0\\
\langle e_{1,2}+e_{2,3},e_{3,4}\rangle & \textrm{ if } p=2,q=-1\\
\{0\} & \textrm{ otherwise.}
\end{cases}
$$
$$E^\infty_{2,(p,q)}=E^3_{2,(p,q)}=
\begin{cases}
\langle e_{1,2}+e_{2,3}+e_{3,4}\rangle & \textrm{ if } p=2,q=-1\\
\{0\} & \textrm{ otherwise.}
\end{cases}
$$
In particular, the only non-trivial dimensions in the spectral sequence $E^k_{2,(p,q)}$, $k\geq 1$ are 
$h^1_{2,(0,0)}=h^1_{2,(1,-1)}=h^2_{2,(0,0)}=1$, $h^1_{2,(2,-1)}=3$, $h^2_{2,(2,-1)}=2$, and $h^3_{2,(2,-1)}=1$. 
Hence according to Theorem~\ref{thm:main}
$$
\begin{aligned}
n_{0,1}(2)+n_{0,2}(2)&=\chi^{\text{rel}}_0(E^1_2)=h^1_{2,0}-h^\infty_{2,0}=2,\\
n_{0,2}(2)&=\chi^{\text{rel}}_0(E^2_2)=h^2_{2,0}-h^\infty_{2,0}=1\\
n_{1,1}(2)+n_{1,2}(2)+n_{1,3}(2)&=\chi^{\text{rel}}_1(E^1_2)=(h^1_{2,1}-h^\infty_{2,1})-(h^1_{2,0}-h^\infty_{2,0})=(3-1)-2=0,\\
\end{aligned}
$$
that is, $n_{0,1}(2)=n_{0,2}(2)=1$ and $n_{1,1}(2)=n_{1,2}(2)=n_{1,3}(2)=0$.
Moreover, $\im\partial_{1}=3$, $\im\partial_{2}=0$, and $r_0=0$, $r_1=1$. By Theorem~\ref{teo:torsionhk},
$$
H_1(A^\chi_\Gamma;\KK)=
\left(\frac{\KK[t^{\pm 1}]}{(t-1)}\right)^3\oplus 
\frac{\KK[t^{\pm 1}]}{(t+1)}\oplus 
\frac{\KK[t^{\pm 1}]}{(t+1)^2}\oplus
\left(\frac{\KK[t^{\pm 1}]}{(t^2-t+1)}\right)^2
$$
and 
$$
H_2(A^\chi_\Gamma;\KK)=\KK[t^{\pm 1}].$$}
\end{exam}

\bibliographystyle{amsplain}
%\bibliography{referencias}

\begin{thebibliography}{10}

\bibitem{Antolin}
Y.~Antol\'{i}n and L.~Ciobanu, \emph{Geodesic growth in right-angled and even
  {C}oxeter groups}, European J. Combin. \textbf{34} (2013), no.~5, 859--874.
  \MR{3021516}

\bibitem{ACM1}
E.~Artal, J.I. Cogolludo-Agust\'{\i}n, and D.~Matei, \emph{Quasi-projectivity,
  {A}rtin-{T}its groups, and pencil maps}, Topology of algebraic varieties and
  singularities, Contemp. Math., vol. 538, Amer. Math. Soc., Providence, RI,
  2011, pp.~113--136. \MR{2777818}

\bibitem{ACLMM-Artin}
E.~Artal Bartolo, J.I. Cogolludo-Agust{\'\i}n, S.~L\'opez de~Medrano, and
  D.Matei, \emph{Module structure of the homology of right-angled {A}rtin
  kernels}, 2019.

\bibitem{Blasco-tesis}
R.~Blasco-Garc\'{i}a, \emph{Even {A}rtin groups}, Ph.D. thesis, Universidad de
  Zaragoza, 2019.

\bibitem{Blasco}
R.~Blasco-Garc\'{i}a and J.I. Cogolludo-Agust\'{i}n, \emph{Quasi-projectivity
  of even {A}rtin groups}, Geom. Topol. \textbf{22} (2018), no.~7, 3979--4011.
  \MR{3890769}

\bibitem{Blasco-PF}
R.~Blasco-Garc\'{i}a, C.~Mart\'{i}nez-P\'{e}rez, and L.~Paris,
  \emph{Poly-freeness of even {A}rtin groups of {FC} type}, Groups Geom. Dyn.
  \textbf{13} (2019), no.~1, 309--325. \MR{3900773}

\bibitem{Bourbaki}
N.~Bourbaki, \emph{\'{E}l\'ements de math\'ematique. {F}asc. {XXXIV}. {G}roupes
  et alg\`ebres de {L}ie. {C}hapitre {IV}: {G}roupes de {C}oxeter et syst\`emes
  de {T}its. {C}hapitre {V}: {G}roupes engendr\'es par des r\'eflexions.
  {C}hapitre {VI}: syst\`emes de racines}, Actualit\'es Scientifiques et
  Industrielles, No. 1337, Hermann, Paris, 1968. \MR{0240238}

\bibitem{Bux-Gonzalez-Bestvina}
K-U. Bux and C.~Gonzalez, \emph{The {B}estvina-{B}rady construction revisited:
  geometric computation of {$\Sigma$}-invariants for right-angled {A}rtin
  groups}, J. London Math. Soc. (2) \textbf{60} (1999), no.~3, 793--801.
  \MR{1753814}

\bibitem{Charney-finite}
R.~Charney and M.W. Davis, \emph{Finite {$K(\pi, 1)$}s for {A}rtin groups},
  Prospects in topology ({P}rinceton, {NJ}, 1994), Ann. of Math. Stud., vol.
  138, Princeton Univ. Press, Princeton, NJ, 1995, pp.~110--124. \MR{1368655}

\bibitem{Charney-kpi1}
\bysame, \emph{The {$K(\pi,1)$}-problem for hyperplane complements associated
  to infinite reflection groups}, J. Amer. Math. Soc. \textbf{8} (1995), no.~3,
  597--627. \MR{1303028 (95i:52011)}

\bibitem{Coxeter-discrete}
H.S.M. Coxeter, \emph{Discrete groups generated by reflections}, Ann. of Math.
  (2) \textbf{35} (1934), no.~3, 588--621. \MR{1503182}

\bibitem{Coxeter-complete}
\bysame, \emph{The complete enumeration of finite groups of the form
  ${R}_i^2=({R}_i {R}_j)^{k(i,j)}= 1$}, J. London Math. Soc. \textbf{10}
  (1935), no.~1, 21--25.

\bibitem{Duchamp-PF}
G.~Duchamp and D.~Krob, \emph{Free partially commutative structures}, Journal
  of Algebra \textbf{156} (1993), no.~2, 318--361.

\bibitem{Hermiller-Sunic-PF}
S.~Hermiller and Z.~{\v{S}}uni{\'c}, \emph{Poly-free constructions for
  right-angled {A}rtin groups}, J. Group Theory \textbf{10} (2007), no.~1,
  117--138. \MR{2288463 (2008e:20046)}

\bibitem{Howie}
J.~Howie, \emph{Bestvina-{B}rady groups and the plus construction}, Math. Proc.
  Cambridge Philos. Soc. \textbf{127} (1999), no.~3, 487--493. \MR{1713123
  (2000h:57008)}

\bibitem{learymuge}
I.J. Leary and M.~Saadetoglu, \emph{The cohomology of {B}estvina-{B}rady
  groups}, Preprint available at \texttt{arXiv:0711.5018 [math.AT]} (2007).

\bibitem{MMVW}
J.~Meier, H.~Meinert, and L.~VanWyk, \emph{Higher generation subgroup sets and
  the {$\Sigma$}-invariants of graph groups}, Comment. Math. Helv. \textbf{73}
  (1998), no.~1, 22--44.

\bibitem{Papadima-Suciu-Toric}
S.~Papadima and A.I. Suciu, \emph{Toric complexes and {A}rtin kernels}, Adv.
  Math. \textbf{220} (2009), no.~2, 441--477. \MR{2466422}

\bibitem{Paris}
L.~Paris, \emph{Lectures on {A}rtin groups and the {$K(\pi,1)$} conjecture},
  Groups of exceptional type, {C}oxeter groups and related geometries, Springer
  Proc. Math. Stat., vol.~82, Springer, New Delhi, 2014, pp.~239--257.
  \MR{3207280}

\bibitem{SV:13}
M.~Salvetti and A.~Villa, \emph{Combinatorial methods for the twisted
  cohomology of {A}rtin groups}, Math. Res. Lett. \textbf{20} (2013), no.~6,
  1157--1175.

\end{thebibliography}
\providecommand{\bysame}{\leavevmode\hbox to3em{\hrulefill}\thinspace}
\providecommand{\MR}{\relax\ifhmode\unskip\space\fi MR }
% \MRhref is called by the amsart/book/proc definition of \MR.
\providecommand{\MRhref}[2]{%
  \href{http://www.ams.org/mathscinet-getitem?mr=#1}{#2}
}
\providecommand{\href}[2]{#2}

\end{document}